\documentclass[9pt,twoside]{amsart}
\usepackage{amsxtra}
\usepackage{color}
\usepackage{amsmath,amsthm,amssymb,bm}
\usepackage{mathrsfs,mathtools}
\usepackage{stmaryrd}
\usepackage{hyperref}
\usepackage{enumerate}
\usepackage{xcolor}
\usepackage{pifont}
\usepackage{adjustbox}
\usepackage{tikz}
\usepackage{comment}

\newtheorem{theorem}{Theorem}[section]

\newtheorem{proposition}[theorem]{Proposition}
\newtheorem{lemma}[theorem]{Lemma}
\newtheorem*{theorem*}{Theorem}

\newtheorem{remark}[theorem]{Remark}

\newcommand{\R}{{\mathbb R}}

\newcommand{\C}{{\mathbb C}}

\newcommand{\T}{{\mathbb T}}

\newcommand{\beq}{\begin{equation}}
\newcommand{\eeq}{\end{equation}}
\renewcommand{\a}{\alpha}
\renewcommand{\b}{\beta}

\renewcommand{\d}{\delta}

\newcommand{\g}{\gamma}

\renewcommand{\l}{\lambda}

\renewcommand{\o}{\omega}

\newcommand{\SU}{{\mathrm{SU}}}

\newcommand{\SO}{{\mathrm {SO}}}

\renewcommand{\T}{{\mathrm T}}

\newcommand{\n}{\nabla}

\DeclareMathOperator\tr{tr\;}

\DeclareMathOperator\End{End}

\DeclareMathOperator\Ad{Ad}
\DeclareMathOperator\ad{ad}

\DeclareMathOperator{\Span}{Span}

\newcommand{\ch}{{\rm{cosh}}}
\newcommand{\sh}{{\rm{sinh}}}

\renewcommand{\ggg}{\mathfrak{g}}
\newcommand{\gh}{\mathfrak{h}}

\newcommand{\gm}{\mathfrak{m}}
\newcommand{\gn}{\mathfrak{n}}

\newcommand{\gp}{\mathfrak{p}}
\newcommand{\gq}{\mathfrak{q}}

\newcommand{\so}{\mathfrak{so}}
\newcommand{\su}{\mathfrak{su}}

\newcommand{\diag}{{\rm diag}}

\numberwithin{equation}{section}

\title[Complete pluriclosed metrics with vanishing Bismut Ricci form on the affine quadric]{Complete pluriclosed metrics with vanishing Bismut Ricci form on the affine quadric}

\author{Fabio Podest\`a and Alberto Raffero}
\address{Dipartimento di Matematica e Informatica ``U.~Dini'' \\ Universit\`a degli Studi di Firenze\\ Viale Morgagni 67/a\\ 50134 Firenze\\ Italy}
\email{fabio.podesta@unifi.it}
\address{Dipartimento di Matematica ``G.~Peano'' \\ Universit\`a degli Studi di Torino\\
Via Carlo Alberto 10\\
10123 Torino\\ Italy}
\email{alberto.raffero@unito.it}

\subjclass[2020]{53C55, 53C29, 53C25}
\keywords{Bismut Hermitian–Einstein metrics; pluriclosed metrics; cohomogeneity-one metrics.}

\begin{document}

\begin{abstract}
We study a one-parameter family of $\mathrm{SO}(4)$-invariant pluriclosed Hermitian metrics $g_c$ with vanishing Bismut Ricci form
on the affine quadric $Q_3\cong TS^3$, previously described in the physics literature. 
For every real parameter $c$, we prove local existence and uniqueness of a real-analytic solution to the defining singular initial-value problem,
together with positivity of the associated metric near the singular orbit $S^3$. We then establish global existence and
completeness for $|c|\leq1$ and finite-time degeneration for $|c|>1$. 
The complete family exhausts all  $\SO(4)$-invariant Bismut Hermitian-Einstein metrics on the affine quadric $Q_3$, 
and joins Stenzel's K\"ahler Ricci-flat to a metric that was first considered by Chamseddine--Volkov/Maldacena--Nu\~nez.
We also determine the leading asymptotics and derive an explicit scalar curvature formula, proving strict positivity for
the non-K\"ahler members and identifying the change in asymptotic scalar curvature at the endpoint $c=1$. 
Finally, we show that all these metrics are not Bismut flat and have full Bismut holonomy $\SU(3)$, providing new examples of such manifolds.
\end{abstract}

\maketitle

\section{Introduction}
Hermitian metrics with closed Bismut torsion and vanishing Bismut Ricci form provide a natural extension of K\"ahler Ricci-flat geometry. 
These conditions combine the pluriclosed equation $dd^c\omega=0$ with the vanishing of the Bismut-Ricci form $\rho^B$. 
According to the recent mathematical literature, we refer to metrics satisfying both conditions as
{\em Bismut Hermitian--Einstein} metrics (shortly BHE metrics), see \cite{GFS,GJS,ABLS,ALL,ALLR,BFG,BFGV}. 

In this paper, we study such metrics on the affine quadric
\[
Q_3=\left\{
z\in\mathbb{C}^4:\sum_{j=1}^4 z_j^2=1
\right\},
\]
equipped with its standard complex structure and the cohomogeneity-one action of $\mathrm{SO}(4)$. 
The manifold $Q_3$ is diffeomorphic to $TS^3$, with the zero section forming the singular orbit. 

The Bismut Hermitian-Einstein condition for an $\mathrm{SO}(4)$-invariant Hermitian metric on $Q_3$ reduces to a singular ordinary
differential equation for a function $v_c$, depending on a real parameter $c$.
The corresponding family of Bismut Hermitian-Einstein metrics $g_c$ can be traced back in the supergravity construction of Butti, Gra\~na, Minasian, Petrini and Zaffaroni
\cite{BGMPZ}, describing the baryonic branch of the Klebanov--Strassler solution. 
The relevant torsional backgrounds arise as a limit of that construction, were also studied by Casero, N\'u\~nez and Paredes \cite{CNP}, and were presented
in an explicit unwarped formulation by Maldacena and Martelli \cite{MM}. 
These works derive the relevant differential equations from supersymmetry and investigate their solutions through a combination of series expansions, 
numerical integration, and analytical approximations in asymptotic and limiting regimes. 
Although this family was described in the supergravity setting, it appears to have received little attention in the mathematical study of
Bismut Hermitian--Einstein metrics.

The present work makes this connection explicit by studying $\mathrm{SO}(4)$-invariant Hermitian metrics on the affine quadric $Q_3$. 
Within this invariant setting, we reduce the pluriclosed condition and the vanishing of the Bismut-Ricci form to a singular ordinary
differential equation. 
We then provide a rigorous local and global analysis of the resulting singular initial-value problem, and we obtain the following main result.
\begin{theorem} On $Q_3$ there exists an analytic family of complete $\SO(4)$-invariant metrics $g_c$, $c\in [0,1]$, with the following properties: 
\begin{enumerate}[i)]
\item $g_0$ is homothetic to the Stenzel's K\"ahler Ricci-flat metric; 
\item $g_c$ is pluriclosed for every $c$ and it is non-K\"ahler for $c>0$;
\item $g_c$ has zero Bismut-Ricci form $\rho^B$ and non-zero Bismut curvature;
\item along a normal transverse curve $\gamma_r$, the metric $g_c$ is determined by smooth functions $u_c,v_c,w_c\in C^\infty(\mathbb R)$ with
$$ 
u_c=v_c',\qquad 
w_c(r)=c\left(r\coth(2r)-\tfrac12\right).
$$
For every $c\in[0,1)$ there exist constants $M_1,M_2>0$ such that
$$ 
v_c(r)=M_1 e^{\frac43 r}(1+o(1)),
\qquad
v_c'(r)=M_2 e^{\frac43 r}(1+o(1)),
\qquad r\to+\infty,
$$
whereas, for the critical value $c=1$,
$$
v_1(r)=r,\qquad u_1(r)=1;$$
\item the scalar curvature $\mathrm{Scal}_{g_c}$ is everywhere positive when $c>0$ and along the normal curve 
\begin{align*}
\lim_{r\to+\infty}{\rm{Scal}}_{g_c}(r)&=0;\qquad c\in (0,1),\\
\lim_{r\to+\infty}{\rm{Scal}}_{g_1}(r)&=6;\end{align*}
\item each $g_c$ is non-homogeneous and different $g_c,g_{c'}$ are not isometric;
\item the holonomy algebra of the Bismut connection of $g_c$ is $\mathfrak{su}(3)$ for all $c\in[0,1]$.  
\end{enumerate}
\end{theorem}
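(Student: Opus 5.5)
The proof has two halves: reduce everything to an ODE system along a transverse geodesic, then analyse the resulting singular initial-value problem. For the reduction, I would fix the principal orbit $\SO(4)/\SO(2)$ and a transversal normal curve $\gamma_r$. At the principal isotropy, decompose $\so(4)=\so(2)\oplus\gm$ into irreducible $\SO(2)$-modules. An invariant $J$-Hermitian metric is then determined along $\gamma_r$ by a few functions. I would arrange the normal direction and one Reeb-type direction in a $J$-invariant plane, and use the remaining two $2$-planes, on which $J$ interchanges two isotropic summands, with coefficients $u,v,w$.

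I would then write $\omega$ in terms of these functions and left-invariant coframes, and impose $dd^c\omega=0$. Next I would compute $\rho^B=\rho^C-dd^*\omega$-type quantities using the Chern Ricci form $-i\partial\bar\partial\log\det$ of the holomorphic volume of $Q_3$. The condition $\rho^B=0$ should then reduce to the following: $u=v'$; $w$ satisfies a linear ODE whose regular solution at $r=0$ is $c(r\coth 2r-\tfrac12)$; and there is a single second-order singular ODE for $v=v_c$, with smoothness conditions at $r=0$ coming from the $S^3$ orbit. Property (i) would follow because $c=0$ forces $w\equiv0$, so $\omega$ is Kähler Ricci-flat and $\SO(4)$-invariant, hence homothetic to Stenzel's metric by uniqueness of the Stenzel ODE solution. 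For (ii), non-Kählerness for $c>0$ comes from $w\neq0$, since $d\omega$ is proportional to $w$-terms.

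For the analysis I would rewrite the singular ODE as a first-order system $rY'=AY+rF(r,Y)$ with an analytic right-hand side. Then I would apply the Briot–Bouquet/Malgrange theorem on singular analytic IVPs. The eigenvalues of $A$ must have no positive integer resonances for the parity-respecting solution, and this gives local analytic existence and uniqueness, with positivity of $g_c$ near $S^3$ (this is what I take "positivity of the associated metric near $S^3$" to mean). Global existence for $|c|\le1$ would come from a comparison/barrier argument: I would show that $v_c'>0$ and that $v_c$ stays trapped between the explicit $c=1$ solution $v_1=r$ and the Stenzel solution. That prevents blow-up or degeneration, and completeness follows since the normal coordinate $r$ is arclength. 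Asymptotics for $c<1$ come from linearizing the ODE at infinity. There $w\sim c r$ is lower order, the dominant balance gives the exponential mode $e^{4r/3}$, and a Grönwall argument fixes the constants $M_1,M_2$. The case $c=1$ is checked directly. This global step, keeping positivity exactly for $|c|\le1$, is the main obstacle. I would look for a monotone quantity, e.g.\ $\Phi=v'-$(something in $w,v$), whose sign is preserved along the flow precisely when $c^2\le1$.

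The remaining items follow from the explicit formulas. For (v), substitute the ODE into the general cohomogeneity-one scalar curvature formula to get an expression that is a sum of squares times $c^2$ plus nonnegative terms, and take limits using (iv). For (vi), the scalar curvature behaviour and the asymptotic growth are isometry invariants distinguishing different $c$, and non-homogeneity follows since $\mathrm{Scal}$ is nonconstant. For (vii), $\rho^B=0$ and the $\SU(3)$-structure give $\mathfrak{hol}\subseteq\su(3)$. By Ambrose–Singer, I would show equality by computing the Bismut curvature at a point of the singular orbit and checking that it, together with its covariant derivatives, spans $\su(3)$. This uses the isotropy $\SO(3)$ acting irreducibly: $\so(3)$ is maximal in $\su(3)$, so it suffices to exhibit one curvature component outside $\so(3)$. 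Nonvanishing of that component also gives the non-flatness claimed in (iii).
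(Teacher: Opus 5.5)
\textbf{Main gap: item (vii) at $c=1$.} For $c\in[0,1)$ your holonomy argument is the paper's. The test component is $g_c(R^B_{\xi,\hat A}\hat A,\xi)=\tfrac12(-u''+u'^2/u)$, whose limit at the singular orbit is $-\tfrac45(1-c^2)\neq0$. Since $\gh$ is normalized by the isotropy $\so(3)$, this forces $\gh=\su(3)$.

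At $c=1$ the argument breaks down. Here $u_1\equiv1$, so this component vanishes identically. In fact the paper notes that $R^B_p(\Lambda^2T_pQ_3)\subseteq\so(3)$ at every $p\in S^3$. So the ``one curvature component outside $\so(3)$'' at the singular orbit, on which both your (vii) and your non-flatness claim in (iii) rest, does not exist for $c=1$.

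Saying ``together with its covariant derivatives'' does not close this. You would have to identify and compute a specific higher covariant derivative of $R^B$ at $p$ with a non-$\so(3)$ part, and the obstruction the paper finds only shows up at order $r^3$ along $\gamma_r$.

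The paper argues instead by contradiction:
\begin{enumerate}[1.]
\item If $\gh=\so(3)$, there is a $\nabla^B$-parallel Lagrangian subbundle $\mathcal E$ with $\mathcal E_p=T_pS^3$.
\item $\mathcal E$ is $\SO(4)$-invariant, because the possible choices give a homomorphism $\SO(4)\to\mathrm{U}(1)$, which must be trivial.
\item $\mathcal E$ contains $\hat A$ along $\gamma_r$.
\item The radial parallel-transport equations then force $\mathcal E_{\gamma(r)}=\Span\{\hat A,\hat Y_1+b\hat X_2,\hat Y_2-b\hat X_1\}$, with $b=(v-s)/(w\tanh r)$ and $s=\sqrt{v^2-w^2}$.
\item A Koszul computation gives $g(\nabla^B_VV,\xi)=-\tfrac{32}{405}r^3+o(r^4)\neq0$ for $V=\hat Y_1+b\hat X_2$, contradicting parallelism.
\end{enumerate}
Non-flatness for all $c$ is obtained separately, from non-homogeneity together with Zhao's theorem that a complete Bismut-flat manifold is a Samelson space.

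\textbf{Smaller errors in the analytic part.}

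\emph{Completeness.} The coordinate $r$ is not arclength: $g(\xi,\xi)=u=v'$, which by (iv) grows like $e^{4r/3}$. Completeness means $\int_0^\infty\sqrt{v_c'}\,dr=\infty$.

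\emph{The barrier.} Once you have the right barrier, completeness is immediate. The barrier you leave open is simply $v_c'>1$. Suppose $\bar r$ is a first point with $v_c'(\bar r)=1$, so $v_c''(\bar r)\le0$. Write $v_c=r+h$ with $h\ge0$. The equation then gives $v_c''(\bar r)=\frac{2}{v_c^2-w_c^2}A_c(\bar r)$, where
$A_c=(1-c^2)\bigl(2\coth(2r)w_1^2+w_1w_1'\bigr)+(4r\coth(2r)-1)h+2\coth(2r)h^2$.
For $|c|<1$ this is positive, a contradiction. Hence $v_c'>1$, and therefore $v_c>r>|w_c|$.

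\emph{The Stenzel upper barrier and the asymptotics.} The upper barrier by the Stenzel solution is neither proved nor needed. Bounds near a putative finite endpoint, and the $e^{4r/3}$ asymptotics, come from the first integral $F=v'(v^2-w^2)/\sinh^2(2r)$, which satisfies $(\log F)'=-4ww'/(v^2-w^2)$. You also need a preliminary growth bound on $v$ that makes this integrable. The equation is nonlinear in $v$, so ``linearizing at infinity'' has to be replaced by this argument. Analyticity in $c$ comes from the parameter version of Briot--Bouquet.

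\emph{Item (vi).} $\lim_{r\to\infty}\mathrm{Scal}_{g_c}=0$ for every $c\in[0,1)$, so this limit does not separate parameters. The paper uses $\mathrm{Scal}_{g_c}|_{S^3}=\tfrac{40}{3}c^2$, together with the fact that an isometry preserves the singular orbit. Alternatively, from (iv) with $M_2=\tfrac43M_1$ one gets $d(o,x)^6\,\mathrm{Scal}_{g_c}(x)\to\tfrac{405}{4}c^2$, which is an isometry invariant.
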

Actually, every analytic $\SO(4)$-invariant BHE metric on $Q_3$ is, up to holomorphic isometry and positive scaling, 
represented by a unique member of the family $g_c$, with $c\in[0,1]$.
Moreover, this family connects two distinguished geometries: the Stenzel's K\"ahler Ricci-flat metric~\cite{S} at $c=0$ and 
the Chamseddine--Volkov/Maldacena--Nu\~nez metric \cite{CV1,CV2,MN} at $c=1$. 

The family constructed here fits into a phenomenon that seems to be emerging in the study of complete non-compact generalized Ricci solitons 
and related string backgrounds: genuinely torsionful solutions may arise in continuous families deforming classical Ricci solitons or 
special holonomy metrics. In the present setting, the Stenzel's K\"ahler Ricci-flat metric appears as the zero-torsion member of a family of complete 
non-K\"ahler Bismut Hermitian-Einstein metrics. 
Similar deformation phenomena occur for Bryant-type generalized Ricci solitons, for complete Ricci-Yang-Mills solitons, and, more recently, 
for toric generalized K\"ahler-Ricci solitons (see e.g. \cite{Womack,ABSU,PR}). 
This behavior contrasts with the substantially stronger rigidity phenomena known in the compact setting, where the simultaneous occurrence of classical 
and genuinely torsionful geometries is often severely restricted or excluded (see e.g. \cite{StreetsTopology}).

\smallskip

The strategy of the proof is the following.  
We fix a normal transverse curve $\gamma_r$ and derive the expression of a Hermitian $\mathrm{SO}(4)$-invariant metric on the regular part of $Q_3$. 
If we then impose that the metric is pluriclosed and smoothly extends across the singular orbit $S^3$, 
we see that the metric only depends on a function $v(r)$ of one variable and a real constant $c$ that appears in the function  
\[
w_c(r)=c\left(r\coth(2r)-\tfrac12\right),
\]
see Lemma \ref{cond}. 
The necessary and sufficient conditions on $v$ guaranteeing the extendability of the metric on the whole $Q_3$ are then studied 
and proven to be equivalent to the extendability of the function $v$ as an odd function on the whole $\mathbb R$ 
(Proposition \ref{prop:extension}).\par 
In order to compute the Bismut-Ricci form $\rho^B$, we use the fact that the quadric $Q_3$ can be seen as the complex homogeneous space $Q_3=\SO(4,\mathbb C)/\SO(3,\mathbb C)$ and that there is an $\SO(4,\mathbb C)$-invariant holomorphic $(3,0)$-form $\Omega$. The Bismut Ricci form $\rho^B$ is then written along the curve $\g_r$ and the equation $\rho^B=0$ turns out to be equivalent to a singular second order ODE involving $v$, namely 

\[
v''
+2v'\frac{vv'+w_cw_c'}{v^2-w_c^2}
-4v'\coth(2r)=0,
\qquad
v(0)=0,\quad v'(0)=1,
\]
where the condition $v'(0)=1$ fixes the homothetic class of the metric.

We first prove that this ODE admits a unique real-analytic odd solution $v_c$ in a neighbourhood of the origin and that $v_c$ depends analytically on the parameter c by applying \cite{L} 
(see Proposition \ref{local}). 
Then, we discuss the long time behaviour of the solution in Theorem \ref{thm:completeness}, where we show that $v_c$ exists on the whole $\R$ for $|c|<1$ 
and gives rise to a complete metric $g_c$, while it develops a finite time singularity for $|c|>1$.

The expression of the scalar curvature of $g_c$ is determined in Section \ref{sect:scalar}, where we also compute its value on the singular orbit and discuss 
its behaviour at infinity. This allows us to show that the metrics $g_c$ are pairwise not isometric, for all $c\in[0,1]$, and not homogeneous 
(Proposition \ref{prop:inhomogeneous}). 
By \cite[Theorem 5]{Z}, this last property ensures that they are not Bismut flat. \par
In the last section, we show that the Bismut holonomy algebra of the metrics $g_c$ coincides with the full $\su(3)$ when $c\in [0,1]$. 
To the best of our knowledge, the metrics $g_c$ with $c\in(0,1]$ provide the first explicit complete non-Kähler Bismut Hermitian–Einstein metrics 
for which the Bismut connection is shown to have full \(\SU(3)\)-holonomy. 
This is in stark contrast to the compact case, where a compact non-K\"ahler Bismut Hermitian-Einstein manifold of complex dimension $n$ 
has the Bismut holonomy algebra contained in $\mathfrak{su}(n-1)$, as it immediately follows from \cite[Prop.~2.6]{ABLS}. 

\smallskip 

Finally, we remark that the same construction can not be applied to the higher dimensional case $Q_n\cong TS^n$, 
by a simple representation-theoretical reason. 
Indeed the space of Hermitian $\SO(n+1)$-invariant metrics can be easily described along a normal geodesic by looking at the space of $L$-invariant metrics 
on each principal orbit, where the principal isotropy $L\cong \SO(n-1)$. 
When $n\geq 4$ it is immediately noticeable that the dimension of this space drops by one compared to the case $n=3$ 
and the pluriclosed condition forces the metric to be K\"ahler.\par

\smallskip

\noindent{\bf Notation.} If $X$ is an element of the Lie algebra $\ggg$, then $\hat X$ denotes the induced vector field on the manifold which is acted on by the Lie group $G$.

\section{Preliminaries}
We consider the affine quadric 
$$Q_3=\left\{z\in \mathbb C^4|\ \sum_{i=1}^4 z_i^2 = 1\right\}$$
which can be identified with the tangent bundle to the sphere $S^3$ by means of the map $\Phi:TS^3\to Q_3$ given by 
$$\Phi(x,v) = \ch(|v|)\ x + i \frac{\sh (|v|)}{|v|} v,\qquad (x,v)\in TS^3.$$
The group $G=\SO(4)$ acts on $Q_3$ by cohomogeneity one and we can consider the transversal curve 
$$\g(r)= i\ \sh(r)e_3 + \ch(r) e_4,\qquad r\in (0,+\infty),$$
so that the stabilizer at points with $r>0$ is given by $H=\SO(2)$ standardly embedded into $G$ as the subgroup fixing both $e_3,e_4$.

We denote by $\xi$ the field $\g'_r$ (extended to a $G$-invariant vector field on the whole complement of $S^3$ in $Q_3$) and we fix the standard vectors 
$$A\coloneqq E_{34},\ X_i\coloneqq E_{i3},\ Y_i\coloneqq E_{i4},\qquad i=1,2,$$
where $E_{ij}$ denotes the skew matrix in $\so(4)$ whose entry $(i,j)$ is given by $1$. 
We also set from now on 
$$\T(r)\coloneqq \tanh(r),$$
and we will frequently omit the variable $r$ for brevity. 

Let $J$ denote the complex structure on $Q_3$ obtained by pulling back the multiplication by $\sqrt{-1}$ on $\C^4$.
A simple computation shows that along the curve $\g$ we have 
$$J\xi = -\hat A,\quad J\hat X_i =  -{\rm{T}}(r) \hat Y_i,\quad J\hat Y_i = \frac 1{{\rm{T}}(r)}\hat X_i,\qquad i=1,2.$$

The tangent space $T_{\g_r}Q_3$ can be written as 
$$T_{\g_r}Q_3 = \mathbb R\cdot \xi + \mathbb R\cdot \hat A + \gp_1 + \gp_2,$$
where $\gp_1 = \operatorname{Span}\{X_1,X_2\}$, $\gp_2= \operatorname{Span}\{Y_1,Y_2\}$ are equivalent $\so(2)$-modules. 
The tangent space to the regular orbit $G/H$ is therefore identified with $\gm\coloneqq \mathbb R\cdot \hat A + \gp_1 + \gp_2$, 
We note that any $\so(2)$-invariant scalar product on $\gp_1+\gp_2$ is represented by a matrix of the following form 
with respect to the basis $\{X_1,X_2,Y_1,Y_2\}$
$$h = \left(\begin{matrix} a^2&0&b&c\\ 0&a^2&-c&b\\ b&-c&d^2&0\\ c&b&0&d^2\end{matrix}\right),$$
as $\operatorname{End}_{\so(2)}(\mathbb R^2) = \operatorname{Span}\{ \mathrm{Id}, K\}$, 
where $K = \left(\begin{matrix}0&1\\-1&0\end{matrix}\right)$.
If we now impose the Hermitian condition, we immediately find that 
$$b= h(X_1,Y_1)=h(JX_1,JY_1) = - h(Y_1,X_1) = 0,$$
and 
$$d^2 = h(Y_1,Y_1) = h(JY_1,JY_1) = \frac 1{{\rm{T}}(r)^2}h(X_1,X_1) = \frac 1{{\rm{T}}(r)^2} a^2.$$
The positive definiteness of $h$ implies 
$$a^4> ({\rm{T}}(r) c)^2.$$

Let now $\{dr,\eta,x_1,x_2,y_1,y_2\}$ be the dual coframe of the basis $\{\xi,\hat A,\hat X_i,\hat Y_i\}$ along the curve $\g$. 
We can write the generic expression for a $G$-invariant Hermitian metric $g$ on the regular part of $Q_3$ along the curve $\g$ as follows
\begin{equation}\label{g}
g = u(r)(dr^2+\eta^2) + v(r)\left[{\rm{T}}(r) (x_1^2+x_2^2) + \frac 1{{\rm{T}}(r)}(y_1^2+y_2^2)\right]+ w(r)(x_1\odot y_2 - x_2\odot y_1)\notag,
\end{equation}
where $u,v,w$ are smooth functions on $(0,+\infty)$ with $$u,v>0,\qquad v> |w|,$$ 
and $\phi\odot\psi := \phi\otimes \psi + \psi\otimes\phi$.
The associated $(1,1)$-form $\o=g(J\cdot,\cdot)$ has the following expression
\beq\label{omega} \o = -u(r)\ dr\wedge\eta -v(r) (x_1\wedge y_1 +  x_2\wedge y_2) + w(r) \left(\T(r) x_1\wedge x_2 + \frac{1}{\T(r)} y_1\wedge y_2\right). \eeq

\subsection{Computation of $d\omega$.} We now compute the differential $d\omega$. We will need the following relations, which can be easily obtained
\begin{equation}\label{bracket}
\begin{split}
[A,X_i] &=-Y_i,\quad [A,Y_i]=X_i,\\
[X_i,Y_j] &= -\d_{ij} A,\quad [X_i,X_j]_\gm = [Y_i,Y_j]_\gm = 0,\quad i,j=1,2.
\end{split}
\end{equation}
\noindent $\bullet$\ It is known that for $B,C,D\in\so(4)$ we have (we recall that $[\hat B,\hat C] = -\widehat{[B,C]}$)
$$d\o(\hat B,\hat C,\hat D)= -\o(\widehat{[B,C]},\hat D) -\o(\widehat{[D,B]},\hat C)-\o(\widehat{[C,D]},\hat B). $$
Therefore 
$$d\o(\hat A,\hat X_1,\hat X_2) = \o(Y_1,X_2) -  \o(Y_2,X_1) = {\rm{T}}(r) g(Y_2,Y_1)- {\rm{T}}(r) g(Y_1,Y_2) = 0.$$
In a similar fashion we get $d\o(\hat A,\hat Y_1,\hat Y_2) = 0$. We then have 
\begin{align*} d\o(\hat A,\hat X_1,\hat Y_j) &= \o(\hat Y_1,\hat Y_j) + \o(\hat X_j,\hat X_1) +\d_{ij}\o(\hat A,\hat A) \\
{}&= \left(\frac 1{{\rm{T}}(r)} - {\rm{T}}(r)\right) g(Y_j,X_1) = \d_{j2} \left(\frac 1{{\rm{T}}(r)} - {\rm{T}}(r)\right) w(r),
\end{align*}
and similarly for $d\o(\hat A,\hat X_2,\hat Y_1) = - (\frac 1{{\rm{T}}(r)} - {\rm{T}}(r))w(r)$.

Finally, we easily see that 
$$d\o(\hat X_1,\hat X_2,\hat Y_j) =  d\o(\hat Y_1,\hat Y_2,\hat X_j) =0 , \quad j=1,2,$$
as $J\hat A$ is $g$-orthogonal to $\hat \gp_i$ for $i=1,2$. Therefore the only non-trivial contributions are given by 
\beq\label{eq1} 
d\o(\hat A,\hat X_1,\hat Y_2) = \left(\frac 1{{\rm{T}}(r)} - {\rm{T}}(r)\right) w(r), \qquad 
d\o(\hat A,\hat X_2,\hat Y_1) = - \left(\frac 1{{\rm{T}}(r)} - {\rm{T}}(r)\right)w(r),\eeq
\noindent $\bullet$\ We now consider for $B,C\in \so(4)$
$$d\o(\xi,\hat B,\hat C) = \frac{d}{dr} \o(\hat B,\hat C) + \o(\xi,\widehat{[B,C]}).$$
We first remark that 
$$d\o(\xi,\hat A,\hat B) = 0\quad \forall B\in \so(4),$$
as both $\xi$ and $\hat A$ are invariant under the isotropy representation, that has no invariant vector in $\gp_1+\gp_2$.\par 
We next consider 
\[
d\o(\xi,\hat X_1,\hat X_2) =  \frac{d}{dr}\o(\hat X_1,\hat X_2) = -\frac{d}{dr}({\rm{T}}(r)\ g(\hat Y_1,\hat X_2)) 
= \frac{d}{dr}({\rm{T}}(r)\ w(r)),
\]
$$ d\o(\xi,\hat Y_1,\hat Y_2) =  \frac{d}{dr}g(J\hat Y_1,\hat Y_2) = \frac{d}{dr}\left(\frac{w(r)}{{\rm{T}}(r)}\right).$$
Finally, 
\begin{align*} 
d\o(\xi,X_i,Y_j) &= \frac{d}{dr} g(J\hat X_i,Y_j) - g(\hat A,[X_i,Y_j]) \\
{}&= \d_{ij}\bigl[-\frac{d}{dr}\bigl({\rm{T}}(r)\frac{v(r)}{{\rm{T}}(r)}\bigr) + u(r)\bigr]\\
{}&= (-v'(r)+u(r))\d_{ij}.
\end{align*}
Summing up, we have 
\begin{equation}\label{domega}
    \begin{split}
        d\omega &= \left(\frac {1}{\T}-\T \right)w\, \eta \wedge ( x_1\wedge y_2 - x_2\wedge y_1) \\
                &\quad+ dr\wedge \left((\T w)' x_1\wedge x_2 + \left(\frac w\T\right)' y_1\wedge y_2\right) \\
                &\quad+ (u-v') dr\wedge (x_1\wedge y_1 + x_2 \wedge y_2).
    \end{split} 
\end{equation} 

\begin{remark}
In particular, the K\"ahler condition $d\o=0$ becomes 
$$w\equiv 0,\quad u= v'.$$
\end{remark}

\subsection{Computation of $dd^c\omega$.} 

First of all, we note that, if $B,C,D\in \gp_1+\gp_2$, then along the curve $\g$ we have
$$dd^c\o(\xi, \hat B,\hat C,\hat D) = 0,\qquad dd^c\o(\hat A, \hat B,\hat C,\hat D) = 0$$
as $\xi,\hat A$ are isotropy invariant and $(\gp_1+\gp_2)^\gh =\{0\}$.

We next recall the well-known formula for the differential of an invariant form, namely for $B_0,\ldots,B_3\in\so(4)$
$$dd^c\o(\hat B_0,\ldots,\hat B_3) = \sum_{i<j} (-1)^{i+j}d^c\o(\widehat{[B_i,B_j]},\hat B_{h(i,j)},\hat B_{k(i,j)}),$$
where $h(i,j)<k(i,j)$ and $\{i,j,h(i,j),k(i,j)\}=\{0,1,2,3\}$. 

Using the convention $d^c\omega=-d\omega(J\cdot,J\cdot,J\cdot)$, we now compute 
\begin{align*}
dd^c\o(\hat X_1,\hat X_2,\hat Y_1,\hat Y_2) &= d^c\o(\widehat{[X_1,Y_1]},X_2,Y_2) + d^c\o(\widehat{[X_2,Y_2]},X_1,Y_1) \\ 
{}&= d\o(J\hat A,J\hat X_2,J\hat Y_2) +d\o(J\hat A,J\hat X_1,J\hat Y_1) \\
{}&= -d\o(\xi,\hat Y_2,\hat X_2) -  d\o(\xi,\hat Y_1,\hat X_1) = 2(u-v').
\end{align*}
We now need to compute $dd^c\o(\xi,\hat A, \hat B,\hat C)$, where $B,C\in \gp_1+\gp_2$. We start with the following general formula 
\begin{align*}
dd^c\o(\xi,\hat A,\hat B,\hat C) &= \frac d{dr}d^c\o(\hat A,\hat B,\hat C) \\
{}&\quad+ d^c\o(\xi,\widehat{[A,B]},\hat C) + d^c\o(\xi,\widehat{[C,A]},\hat B) + d^c\o(\xi,\widehat{[B,C]},\hat A).
\end{align*}
It then follows 
\begin{align*} 
dd^c\o(\xi,\hat A,\hat X_1,\hat X_2) &= -\frac d{dr}({\rm{T}}(r)^2\ d\o(\xi,\hat Y_1,\hat Y_2)) \\
{}&\quad - d^c\o(\xi,\hat Y_1,\hat X_2) + d^c\o(\xi,\hat Y_2,\hat X_1)\\
{}&= -\frac d{dr}\left( {\rm{T}}(r)^2 \frac{d}{dr}\left(\frac{w(r)}{{\rm{T}}(r)}\right)\right) + d\o(\hat A,\hat X_1,\hat Y_2) - d\o(\hat A,\hat X_2,\hat Y_1)\\
{}&= -\frac d{dr}\left( {\rm{T}}(r)^2 \frac{d}{dr}\left(\frac{w(r)}{{\rm{T}}(r)}\right)\right) + 2\left(\frac 1{{\rm{T}}(r)}-{\rm{T}}(r)\right)w(r).
\end{align*} 

We now compute 
\begin{align*} dd^c\o(\xi,\hat A,\hat X_1,\hat Y_1) &= \frac d{dr} d^c\o(\hat A,\hat X_1,\hat Y_1) \\
{}&\quad + d^c\o(\xi, -\hat Y_1,\hat Y_1) + d^c\o(\xi,- \hat X_1,\hat X_1) + d^c\o(\xi, -\hat A,\hat A)\\
{}&= (v'-u)'.
\end{align*}
Moreover 
\begin{align*} dd^c\o(\xi,\hat A,\hat X_1,\hat Y_2) &= \frac d{dr} d^c\o(\hat A,\hat X_1,\hat Y_2) \\
{}& \quad+ d^c\o(\xi, -\hat Y_1,Y_2) + d^c\o(\xi,- \hat X_2,X_1) \\
{}&=\frac d{dr} d\o(\xi,\hat Y_1,\hat X_2) - \frac 1{{\rm{T}}(r)^2} d\o(\hat A,\hat X_1,\hat X_2) - {\rm{T}}(r)^2\ d\o(\hat A,\hat Y_1,\hat Y_2) = 0.
\end{align*}
We now recall that the form $dd^c\o$ is of type $(2,2)$ and therefore it is $J$-invariant. Therefore 
$$dd^c\o(\xi,\hat A,\hat X_1,\hat X_2)= {\rm{T}}(r)^2\ dd^c\o(-\hat A,\xi,-\hat Y_1,\hat Y_2) = {\rm{T}}(r)^2\ dd^c\o(\xi,\hat A,\hat Y_1,\hat Y_2)$$
and similarly
$$dd^c\o(\xi,\hat A,\hat X_1,\hat Y_2) =  dd^c\o(\xi,\hat A,\hat X_2,\hat Y_1)=0.$$
Finally, the isotropy action implies that 
$$dd^c\o(\xi,\hat A,\hat X_1,\hat Y_1) = dd^c\o(\xi,\hat A,\hat X_2,\hat Y_2).$$
Therefore we have
\begin{lemma}\label{cond} The metric $g$ is pluriclosed if and only if the following conditions hold 
\begin{enumerate}[$(1)$]
\item $v'(r)=u(r)$; \vspace{0.1cm}
\item\label{2} $\frac d{dr}\bigl( {\rm{T}}(r)^2 \frac{d}{dr}\bigl(\frac{w(r)}{{\rm{T}}(r)}\bigr)\bigr) - 2\bigl(\frac 1{{\rm{T}}(r)}-{\rm{T}}(r)\bigr)w(r)=0.$
\end{enumerate}
\end{lemma}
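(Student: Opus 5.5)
The plan is to assemble the component computations above into a full description of the $4$-form $dd^c\o$ along $\g$, and then read off when it vanishes. Since $\o$ and $J$ are $G$-invariant, so is $dd^c\o$. Hence it vanishes on the regular part of $Q_3$ if and only if it vanishes at every point $\g_r$. At such a point it suffices to evaluate it on all $4$-tuples drawn from the basis $\{\xi,\hat A,\hat X_1,\hat X_2,\hat Y_1,\hat Y_2\}$. Pluriclosedness on all of $Q_3$ then follows by continuity, since the regular part is dense.

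First I split the possible $4$-tuples by how many of $\xi,\hat A$ they contain.

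\emph{Neither $\xi$ nor $\hat A$.} The only tuple is $(\hat X_1,\hat X_2,\hat Y_1,\hat Y_2)$, and the computation above gives $dd^c\o(\hat X_1,\hat X_2,\hat Y_1,\hat Y_2)=2(u-v')$.

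\emph{Exactly one of $\xi,\hat A$.} These components vanish, as already observed: $\xi$ and $\hat A$ are fixed by the isotropy $H=\SO(2)$, and $(\gp_1+\gp_2)^{\gh}=0$. The argument is that the $3$-linear map on $\gp_1+\gp_2$ obtained by contracting with $\xi$ or $\hat A$ is $H$-invariant. Under $H$, $\gp_1+\gp_2$ is the sum of two copies of the standard $\mathbb R^2$ of $\SO(2)$ (weight $\pm1$). An odd tensor power of this representation has no trivial summand, so the map is zero.

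\emph{Both $\xi$ and $\hat A$.} Here I use the computed values. The $(\hat X_1,\hat X_2)$ component is the left-hand side of condition \eqref{2} up to sign. By $J$-invariance of the $(2,2)$-form $dd^c\o$, the $(\hat Y_1,\hat Y_2)$ component is $\T^{-2}$ times it, so it gives nothing new. The mixed components $(\hat X_1,\hat Y_2)$ and $(\hat X_2,\hat Y_1)$ vanish identically. The components $(\hat X_1,\hat Y_1)$ and $(\hat X_2,\hat Y_2)$ both equal $(v'-u)'$.

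Collecting everything, $dd^c\o=0$ along $\g$ is equivalent to three equations: $u=v'$, $(v'-u)'=0$, and the equation in \eqref{2}. The second follows from the first, so the system reduces exactly to conditions $(1)$ and $(2)$. This proves both directions at once.

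The main obstacle is the completeness of this case analysis, which must not miss any component. Two points need to be checked. The first is that the vanishing of the tuples with exactly one of $\xi,\hat A$ is genuinely forced by the isotropy representation, which is the weight argument just given. The second is that the $J$-invariance relation between the $(\hat X_1,\hat X_2)$ and $(\hat Y_1,\hat Y_2)$ components carries the correct factor $\T^2$. That factor comes from $J\hat X_i=-\T\hat Y_i$ and $J\xi=-\hat A$, $J\hat A=\xi$. It introduces no extra condition, because the factor is nonzero for $r>0$.
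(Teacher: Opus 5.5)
Your proposal is correct and is essentially the paper's own argument. You assemble the same component computations of $dd^c\omega$ along $\gamma$: the value $2(u-v')$ on $(\hat X_1,\hat X_2,\hat Y_1,\hat Y_2)$, the $(\xi,\hat A,\hat X_1,\hat X_2)$ component giving condition (2) together with its $J$-related $(\hat Y_1,\hat Y_2)$ counterpart, the vanishing mixed components, and the $(v'-u)'$ components. You then use isotropy invariance to discard the tuples containing exactly one of $\xi,\hat A$, and your weight argument for that step is simply a more explicit version of the paper's appeal to $(\gp_1+\gp_2)^{\gh}=0$.
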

Equation \eqref{2} can be explicitely integrated to yield the solutions 
$$w(r) = c_1 \coth (2r) + c_2\left(r\coth(2r)-\frac 12\right),$$
with $c_1,c_2\in \mathbb R$. Now the function 
$$w(r) = g(\hat X_1,\hat Y_2)|_{\g(r)}$$
and therefore, if we like to have $g$ extendable around the singular orbit $S^3$, we must have $\lim_{r\to 0} w(r)=0$. As $\lim_{r\to 0}r\coth(2r) = \frac 12$, we must have $c_1=0$, hence the admissible function $w(r)$ has the form 
\beq\label{w}
w(r) = c \left(r\coth(2r)-\frac 12\right),\qquad c\in\mathbb R.
\eeq

\subsection{Smoothness conditions for the extendability of the metric across the singular orbit $S^3$}
We consider the singular point $q=e_4\in Q_3$, whose isotropy is the subgroup $L\cong \SO(3)$. 
We will use the same arguments used in \cite{PS}, referring the reader to  \cite[Prop.6.1]{PS} for more details.  
 
 \begin{proposition}\label{prop:extension}
Let $g$ be a $G$-invariant Hermitian metric on $Q_3\setminus S^3$ of the form
\[
g=u(r)(dr^2+\eta^2)
+v(r)\left[{\rm{T}}(r)(x_1^2+x_2^2)
+\frac1{{\rm{T}}(r)}(y_1^2+y_2^2)\right]
+w(r)(x_1\odot y_2-x_2\odot y_1). 
\]
Then $g$ extends smoothly across the singular orbit $S^3$ if and only if $u,v,w$ admit smooth extensions to $(-\varepsilon,\varepsilon)$,  
for some $\varepsilon >0$, with
$$
u(r),w(r)\ {\rm{even}},\qquad v(r)\ {\rm{odd}},
$$
and
$$
u(0)=v'(0),\qquad w(0)=0.
$$
The extension is positive definite along the singular orbit $S^3$ if and only if
$$
u(0)=v'(0)>0.
$$

\end{proposition}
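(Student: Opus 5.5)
We follow the strategy of \cite[Prop.~6.1]{PS}: reduce smoothness of a $G$-invariant tensor near $S^3$ to smoothness of an $L$-equivariant map on the normal slice. Near $q=e_4$ the normal space at $q$ is $\nu_q\cong\R^3$, with $L\cong\SO(3)$ acting by its standard representation; $\gamma$ is a ray through the origin of this slice. The tangent space decomposes $L$-equivariantly as $T_qQ_3=\mathfrak l^\perp\oplus\nu_q$, where $\mathfrak l^\perp=\mathrm{Span}\{Y_1,Y_2,E_{34}\}\cong\R^3$ is the tangent space of $S^3$ and $\nu_q$ is spanned by $J$ applied to it.

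Along $\gamma$ we have $J\xi=-\hat A$ and $J\hat Y_i=\hat X_i/\T(r)$. Since $\hat X_i$ vanishes at $q$ to first order, $\hat X_i/\T(r)$ extends to a nonvanishing vector field at $r=0$. So the natural frame extending across $r=0$ is
\[
e_1=\xi,\quad e_2=\hat X_1/\T,\quad e_3=\hat X_2/\T
\]
for $\nu_q$, together with
\[
f_1=\hat A,\quad f_2=\hat Y_1,\quad f_3=\hat Y_2
\]
for the orbit directions. One checks that $e_a=Jf_a$ up to sign. In this frame the metric takes the form
\[
g(f_1,f_1)=u,\quad g(f_i,f_i)=v/\T\ (i=2,3),\quad g(e_1,e_1)=u,\quad g(e_i,e_i)=v/\T,
\]
\[
g(f_2,e_3)=w/\T,\quad g(f_3,e_2)=-w/\T.
\]
So $g$ is determined by a symmetric-bilinear-valued map on the slice. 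By Hermitianity this map is really a map into $L$-invariant-compatible bilinear forms on $\R^3\oplus J\R^3$.

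The second step applies the standard criterion (Kazdan--Warner / Eschenburg--Wang type) to this map, evaluated at $t\,\nu$ with $\nu$ the unit radial vector in $\R^3$. The requirement is that the map $\R^3\to\mathrm{Sym}^2(\R^3\oplus\R^3)$ be smooth and $L$-equivariant. We decompose $\mathrm{Sym}^2$ into $\SO(3)$-irreducibles ($\R$, $\mathrm{Sym}^2_0\R^3$, $\Lambda^2\R^3\cong\R^3$) and write down the equivariant polynomials in $x\in\R^3$:
\begin{itemize}
\item for the trivial and $\mathrm{Sym}^2_0$ parts these are $\phi(|x|^2)\,\mathrm{Id}+\psi(|x|^2)\,x\otimes x$;
\item for the skew coupling between $\R^3$ and $J\R^3$ the relevant equivariant is $x\mapsto(\text{cross product with }x)$, with even coefficient functions.
\end{itemize}
Restricted to the ray this gives:
\begin{enumerate}
\item radial entry $u=\phi+\psi t^2$, which is even;
\item transverse entries $v/\T=\phi$, which is even, so $v$ is odd since $\T$ is odd;
\item the coupling $g(f_2,e_3)=w/\T$ equals $t\,\chi(t^2)$, so $w=\T\,t\,\chi$ is even with $w(0)=0$.
\end{enumerate}
Matching the radial and transverse entries at $t=0$ (both equal $\phi(0)$) gives $u(0)=\lim v/\T=v'(0)$. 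Conversely, given even $u,w$, odd $v$, $u(0)=v'(0)$ and $w(0)=0$, the three functions define $\phi,\psi,\chi$ as smooth functions of $t^2$: $\psi=(u-v/\T)/t^2$ is smooth because the numerator is even and vanishes at $0$, and $\chi=w/(t\T)$ is smooth for the same reason. This gives a smooth equivariant extension.

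The main obstacle is identifying precisely how $\hat X_i$ and $\hat Y_i$ relate to slice coordinates near $q$. In particular we must verify that $e_2,e_3$ correspond to $x\mapsto$ the non-radial directions of $\nu_q$, and that the mixed $\R^3\otimes J\R^3$ component admits only the cross-product equivariant (an odd function of $t$ times the frame). Otherwise the claimed parities for $w$ could shift. We would check this by computing $\hat X_i$ at $\gamma(r)$ to first order in $r$.

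Positivity is then immediate. At $r=0$ the form is $\phi(0)\,\mathrm{Id}$ on both $\R^3$ factors, with zero coupling since $w(0)=0$. It is therefore positive definite if and only if $\phi(0)=u(0)=v'(0)>0$.
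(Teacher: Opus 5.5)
Your proposal is correct and is essentially the paper's argument. Both restrict to the slice $V$ of $TS^3\cong G\times_L V$ and apply the $L$-equivariant extension criterion of \cite[Prop.~6.1]{PS}. The only difference is bookkeeping: the paper decomposes $\omega$ in $\Lambda^2(V^*+\mathfrak{n}^*)$, so $w$ sits in the $\Lambda^2V^*$ and $\Lambda^2\mathfrak{n}^*$ parts and $u,v$ in the mixed part (handled via the forms $\eta$ and $\lambda\wedge\mu$). You decompose $g$ in $\mathrm{Sym}^2$, where these roles are exchanged.

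The step you flag as the main obstacle is settled exactly, not to first order. In the model, $\gamma_r=[e,rv_3]$ and $\hat X_i|_{\gamma_r}=r\,\partial/\partial z_i$, because $X_i\in\mathfrak{l}$ acts linearly on $V$. So your $\hat X_i/\T$ is the constant slice frame times the even positive factor $r/\T(r)$, which does not change any parity condition. Also, the mixed block has no $\mathrm{Id}$ or $x\otimes x$ term simply because $g(\xi,\hat A)=g(\hat X_i,\hat Y_i)=0$ along $\gamma$, not because those equivariants are absent.
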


\begin{proof} Following \cite[Proposition~6.1]{PS}, a tubular neighbourhood of the singular orbit can be written as
$$
G\times_L V,\qquad L=\SO(3),\qquad V\simeq\mathbb R^3,
$$
where $L$ acts on $V$ by its standard representation. Moreover,
$$
\mathfrak n\coloneqq T_{eL}(G/L)\simeq\mathbb R^3
$$
is also the standard $L$-module and can therefore be identified with $V$. Let $q=[e,0]\in G\times_LV$ and consider the tangent vector $v_3:= \g_r'(0)\in V$ so that $\g_r$ can be represented as $p_r=[e,rv_3]$. Now consider $v_i = X_i\cdot v_3\in V$, $i=1,2$, given by the image of the infinitesimal isotropy action of $X_i$, so that $\{v_1,v_2,v_3\}$ is a basis with corresponding cartesian coordinates $(z_1,z_2,r)$ on $V$. Along the 
normal transversal curve we have
$$
\widehat X_i\big|_{p_r}
=
r\frac{\partial}{\partial z_i}\bigg|_{p_r},
\qquad
\xi\big|_{p_r}
=
\frac{\partial}{\partial r}\bigg|_{p_r}.
$$
Therefore
$$
\left\{
\frac{\partial}{\partial r},
\frac{\partial}{\partial z_1},
\frac{\partial}{\partial z_2},
\widehat A,\widehat Y_1,\widehat Y_2
\right\}
$$
extends to a smooth local frame in a neighbourhood of $q$.
Let
$$
\{dr,dz_1,dz_2,\alpha,\beta_1,\beta_2\}
$$
denote its dual coframe. Along the normal transversal curve, the forms $\alpha,\beta_1,\beta_2$ agree, respectively, with $\eta,y_1,y_2$, while
$$
x_i=\frac{1}{r}\,dz_i,\qquad i=1,2.
$$
Recalling that
$$
\omega=
-u\,dr\wedge\eta
-v\left(x_1\wedge y_1+x_2\wedge y_2\right)
+\T\ w\,x_1\wedge x_2
+\frac{w}{\T}\,y_1\wedge y_2,
$$
we obtain
$$
\begin{aligned}
\omega_{p_r}
={}&-u(r)\,dr\wedge\alpha
+\frac{{\rm{T}}(r)w(r)}{r^2}\,dz_1\wedge dz_2\\
&-\frac{v(r)}{r}
\left(dz_1\wedge\beta_1+dz_2\wedge\beta_2\right)
+\frac{w(r)}{{\rm{T}}(r)}\,\beta_1\wedge\beta_2.
\end{aligned}
$$
We look for conditions equivalent to the extendability of the form $\o$, as we already know that $J$ extends, and the extendability of $g$ follows. 
We restrict $\o$ to the slice $V\smallsetminus\{0\}$ and obtain an $L$-equivariant map 
\beq\label{tilde}\tilde \o:V\smallsetminus\{0\}\to \Lambda^2(V^*+\gn^*) \cong 
\Lambda^2V^*+(V^*\otimes\gn^*)
+ \Lambda^2\gn^*.\eeq
It is clear that $\omega$ extends smoothly if and only if each component of $\tilde\o$ in the above decomposition does extend smoothly across the singular orbit. The component along $\Lambda^2V^*\cong V$ is determined by the restriction to $p_r$, namely by $\frac{{\rm{T}}(r)w(r)}{r^2}\,dz_1\wedge dz_2$. It extends smoothly if and only if the function $\frac{{\rm{T}}(r)w(r)}{r^2}$ extends smoothly as an odd function. This happens precisely when $w$ extends as an even smooth function with $w(0)=0$.  

The component along $\Lambda^2\gn^*\cong \gn$ is determined by the term $\frac{w(r)}{{\rm{T}}(r)}\,\beta_1\wedge\beta_2$ and again the component $\tilde\o^{\Lambda^2\gn^*}$ extends smoothly if and only if the function $\frac{w(r)}{{\rm{T}}(r)}$ extends smoothly as an odd function, leading to the same condition on $w(r)$, namely eveness with $w(0)=0$.

It remains to consider the component of $\widetilde\omega$ in $V^*\otimes\gn^*$. This is determined by the expression along the curve $p_r$
$$\tilde\o^{V^*\otimes \gn*}|_{p_r}= -u(r) dr \wedge\alpha - \frac{v(r)}r (dz_1\wedge \b_1 + dz_2\wedge \b_2).$$
We now define the global $L$-invariant forms 
\[
\begin{split}
\eta &\coloneqq dr\wedge\a + dz_1\wedge\b_1+ dz_2\wedge \b_2, \\
\l &   \coloneqq  rdr+z_1dz_1+z_2dz_2,\quad \mu \coloneqq r\a+z_1\b_1+z_2\b_2.
\end{split}
\]
Then 
$$\l\wedge\mu|_{p_r} = r^2 dr\wedge\a,$$
so that 
 $$\tilde\o^{V^*\otimes \gn*}|_{p_r} = -\frac{v(r)}r \eta|_{p_r} + \frac 1{r^2}\Bigl[\frac{v(r)}r-u(r)\Bigr] (\l\wedge\mu)|_{p_r}.$$
It follows from the same equivariance argument used in \cite[Proposition~6.1]{PS} that the mixed component extends smoothly across the origin if and only if the functions 
$$
\frac{v(r)}r, \qquad
\frac{u(r)-\frac{v(r)}r}{r^2}
$$
extend as smooth even functions of $r$. It is easily seen that this is actually equivalent to saying that 
$$v(r)\ {\rm{odd}},\qquad u(r)\ {\rm{even}},\qquad u(0) = v'(0).$$
The last claim is clear from the expression of the metric.
\end{proof}

In our case, when we look for pluriclosed metrics, the function $w(r)$ has been determined in \eqref{w} and it satifies the extendability conditions, 
as $w(0)=0$ and it is even for every $c\in\mathbb R$. Moreover, condition (1) in Lemma \ref{cond} implies the extendability condition $u(0)=v'(0)$. 
Therefore the extendability conditions boil down to the following
\begin{lemma}\label{lem:extend} 
The $G$-invariant metric $g$ on $Q^3\smallsetminus S^3$ as in \eqref{g} is pluriclosed and extends smoothly across th esingual orbit if and only if the following conditions are satisfied
\begin{itemize}
\item[i)] $u(r),v(r)$ extend as smooth functions on a suitable neighborhood of the origin with $v(r)$ odd and $u(r)=v'(r)$ and $v'(0)>0$;
\item[ii)] $w(r) = c\ (r\coth(2r)-\frac 12),\qquad c\in\mathbb R$;
\item[iii)] $v(r),v'(r) >0$ and $v(r)> |w(r)|$ for every $r>0$.
\end{itemize}
\end{lemma}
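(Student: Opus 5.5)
The lemma follows by putting together Lemma \ref{cond}, the integration of equation \eqref{2} leading to \eqref{w}, and Proposition \ref{prop:extension}. I will prove the two implications separately. For the pointwise positivity on the regular part, I will use the constraints listed right after \eqref{g}.

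\emph{Necessity.} Assume $g$ is pluriclosed on $Q_3\smallsetminus S^3$ and extends smoothly across $S^3$.
\begin{itemize}
\item[(a)] By Lemma \ref{cond}(1), $u=v'$ on $(0,+\infty)$.
\item[(b)] By Lemma \ref{cond}(2), $w=c_1\coth(2r)+c_2\left(r\coth(2r)-\tfrac12\right)$. Proposition \ref{prop:extension} forces $w$ to extend continuously with $w(0)=0$. Since $\coth(2r)\to\infty$ as $r\to0$ while $r\coth(2r)-\tfrac12\to0$, this gives $c_1=0$, which is ii).
\item[(c)] Proposition \ref{prop:extension} also gives smooth extensions of $u$ (even) and $v$ (odd) near $0$. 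The identity $u=v'$ then persists on the extension by continuity.
\item[(d)] The metric must be positive definite on $S^3$. By the last assertion of Proposition \ref{prop:extension}, this means $v'(0)=u(0)>0$, which completes i).
\item[(e)] Positivity on the regular part requires $u,v>0$ and $v>|w|$. With $u=v'$, this is exactly iii).
\end{itemize}

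\emph{Sufficiency.} Assume i)--iii).
\begin{itemize}
\item[(a)] Condition iii) gives $u=v'>0$, $v>0$ and $v>|w|$. Hence \eqref{g} defines a positive Hermitian metric on each regular orbit; these are precisely the requirements stated after \eqref{g}.
\item[(b)] By ii), $w$ solves equation \eqref{2}. Together with $u=v'$, Lemma \ref{cond} gives $dd^c\omega=0$ on the regular part.
\item[(c)] For the extension, $w(r)=c\left(r\coth(2r)-\tfrac12\right)$ is real-analytic near $0$. Indeed, $r\coth(2r)=\tfrac{2r}{2}\coth(2r)$ and $x\coth x$ is even and analytic with value $1$ at $0$. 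So $w$ is even with $w(0)=0$.
\item[(d)] Since $v$ is odd and smooth, $u=v'$ is even and smooth with $u(0)=v'(0)$. Proposition \ref{prop:extension} then yields a smooth extension, positive definite on $S^3$ because $v'(0)>0$.
\item[(e)] Pluriclosedness passes to the extension by continuity, since $Q_3\smallsetminus S^3$ is dense.
\end{itemize}

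There is no real obstacle. The only points needing care are the bookkeeping that the condition $u(0)=v'(0)$ of Proposition \ref{prop:extension} is automatic once $u=v'$, and the verification that $r\coth(2r)$ is an even smooth function.
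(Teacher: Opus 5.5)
Your proposal is correct and takes the same route as the paper, which derives the lemma in its preceding paragraph: combine Lemma \ref{cond}, the explicit integration of the equation for $w$ (with $c_1=0$ forced by $w(0)=0$), and Proposition \ref{prop:extension}, noting that $u=v'$ makes $u(0)=v'(0)$ automatic. You write out both implications, the positivity conditions on the regular part, and the density argument for pluriclosedness, which the paper leaves implicit.
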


\section{Computation of the Bismut-Ricci form}\label{sect.rho}
We denote by $\n^B$ the Bismut connection of the metric $g$ and by $\rho^B$ its Ricci form which is defined, in accordance with the standard formula in the K\"ahler case, as 
$$\rho^B(X,Y) = \frac 12 \sum_{i=1}^6 g(R^B_{XY}Je_i,e_i),$$
where $R^B_{XY}= [\n^B_X,\n^B_Y]-\n^B_{[X,Y]}$. 
We also use the Lee form $\theta$ which is defined as 
$$d(\omega^2) = \theta\wedge \omega^2$$
and satisfies $\theta = Jd^*\omega$, where $J\theta=-\theta\circ J$. We then have the formula (see \cite{IP}, but beware that the conventions regarding $\o$ and $\rho$ differ by a sign)
$$\rho^B = \rho^C + d(J\theta).$$
Moreover we know that we can express 
$$\rho^C = \frac 12 dd^c\log(||\Omega||^2),$$ 
where $\Omega$ is a holomorphic $(3,0)$-form on $Q_3$ (see e.g. \cite{Bes}). Such a form does exist on $Q_3$, which can be realized as the complex homogeneous space $\SO(4,\mathbb C)/\SO(3,\mathbb C)$ with $SO(3,\mathbb C) = \SO(4,\mathbb C)_q$, $q=e_4$. Indeed, see \cite[Lemma 2]{S}, we can write $\so(4,\mathbb C) = \so(3,\mathbb C) + \gq$, where $\gq\cong\mathbb C^3$ is the $\ad(\so(3,\mathbb C))$-invariant complement given by the span of $A,Y_1,Y_2$. As $\Lambda^3\gq^*$ is a trivial module, we have a nonzero $\Omega_o\in \Lambda^3\gq^*$ which extends to a $\SO(4,\mathbb C)$-invariant holomorphic $(3,0)$-form $\Omega$. We can also normalize it so that $\Omega_o(A,Y_1,Y_2)=1$.
Our aim is now to compute the value of $\Omega$ along the curve $\g_r$. We first observe that the curve $\g_r$ can be expressed as the orbit of a one-parameter subgroup in $\SO(4,\mathbb C)$. Indeed, we have 
$$\g_r = a_r\cdot e_4,\quad a_r:= exp(irA).$$
We also consider along $\g_r$ the three $(1,0)$ vectors 
$$\begin{matrix}Z_0 =& \hat A - iJ\hat A &=& \hat A - i \xi\\
Z_1 =& \hat X_1 - iJ\hat X_1 &=& \hat X_1 + i {\rm{T}}(r) \hat Y_1\\
Z_2 =& \hat X_2 - iJ\hat X_2 &=& \hat X_2 + i {\rm{T}}(r) \hat Y_2.
\end{matrix}$$
We aim at computing 
$$\Omega|_{\g_r}(Z_0,Z_1,Z_2) = \Omega|_{a_r q}(Z_0,Z_1,Z_2) = \Omega_q(\Ad(a_r)^{-1}Z_0,\Ad(a_r^{-1})Z_1,\Ad(a_r^{-1})Z_2).$$ 
Moreover at the point $q$, if $\pi:\so(4,\mathbb C)\to \gq$ is the projection, we have 
$$\Omega|_{\g_r}(Z_0,Z_1,Z_2) = \Omega_q(\pi\Ad(a_r)^{-1}Z_0,\pi\Ad(a_r^{-1})Z_1,\pi\Ad(a_r^{-1})Z_2).$$
An easy calculation shows that 
$$\pi\Ad(a_r)^{-1}Z_0 = 2A$$
and 
$$\Ad(a_r^{-1})Z_j = (\ch(r)+{\rm{T}}(r) \sh(r))X_j+i(\sh(r)+\T(r)\ch(r))Y_j,\quad j=1,2$$ 
so that 
$$\pi\Ad(a_r^{-1})Z_j = 2i\ \sh(r) Y_j, \quad j=1,2.$$
This implies that 
$$\Omega|_{\g_r}(Z_0,Z_1,Z_2) = \Omega_q(2A,2i\ \sh(r) Y_1,2i\ \sh(r) Y_2) = - 8\ \sh^2(r).$$ 
Now, the norm of the form $\Omega$ along the curve $\g_r$ is given by 
$$||\Omega|_{\g_r}||_g^2 = \frac{|\Omega(Z_0,Z_1,Z_2)|^2}{\det(g(Z_i,\bar Z_j))}$$
and a straightforward calculation gives 
\beq\label{Norm}
||\Omega|_{g_r}||_g^2 = \frac{2\ \sh(2r)^2}{u(v^2-w^2)}.
\eeq

\begin{lemma}\label{ddc} Let $f\in C^\infty(M)^G$. Then 
$$dd^cf|_{\g_r} = -f''(r)\ dr\wedge \eta - f'(r)(x_1\wedge y_1+ x_2\wedge y_2).$$
\end{lemma}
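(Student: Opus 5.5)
We compute $dd^cf$ from $d^cf$, using the convention $d^c\phi=-d\phi(J\cdot,\ldots,J\cdot)$ fixed above; on functions this reads $d^cf=-df\circ J$. Since $f$ is $G$-invariant, along $\gamma$ we have $df=f'(r)\,dr$, and $df$ vanishes on all fundamental fields $\hat B$. Evaluating on the frame via the relations $J\xi=-\hat A$, $J\hat A=\xi$, $J\hat X_i=-\T\hat Y_i$, $J\hat Y_i=\T^{-1}\hat X_i$ gives $d^cf(\hat A)=-df(J\hat A)=-f'$, while $d^cf$ vanishes on $\xi$, $\hat X_i$ and $\hat Y_i$. Hence
$$d^cf|_{\gamma_r}=-f'(r)\,\eta.$$
The $1$-form $d^cf$ is $G$-invariant, since $J$ is $G$-invariant. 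So it suffices to compute the differential of the invariant $1$-form $\beta:=d^cf$, which equals $-f'\eta$ along $\gamma$.

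Next we compute $d\beta$ on pairs of frame vectors, using the same invariant-form formulas as in the computation of $d\omega$:
$$d\beta(\xi,\hat B)=\tfrac{d}{dr}\beta(\hat B),\qquad d\beta(\hat B,\hat C)=-\beta(\widehat{[B,C]}).$$
The first formula holds because $[\xi,\hat B]=0$, as $\xi$ is $G$-invariant. The second follows from $[\hat B,\hat C]=-\widehat{[B,C]}$ together with the fact that $\beta(\hat B)$ is constant on orbits, so the derivative terms drop along the orbit directions. For the first type, $d\beta(\xi,\hat A)=\tfrac{d}{dr}(-f')=-f''$, and $d\beta(\xi,\hat X_i)=d\beta(\xi,\hat Y_i)=0$.

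For the second type we use the brackets \eqref{bracket}. The brackets $[A,X_i]=-Y_i$ and $[A,Y_i]=X_i$ have no $A$-component, so these pairs contribute nothing. The same holds for $[X_i,X_j]_\gm=[Y_i,Y_j]_\gm=0$. The only pair that contributes is $[X_i,Y_j]=-\delta_{ij}A$, which gives
$$d\beta(\hat X_i,\hat Y_j)=-\beta(-\delta_{ij}\hat A)=\delta_{ij}\beta(\hat A)=-\delta_{ij}f'.$$
Here I should double check that only the $\gm$-component of a bracket matters. The $\gh$-component induces a vector field that vanishes at $\gamma_r$, so it indeed contributes nothing.

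Assembling these values, $dd^cf|_{\gamma_r}=-f''\,dr\wedge\eta-f'(x_1\wedge y_1+x_2\wedge y_2)$, which is the claimed formula. The only real obstacle is sign bookkeeping: the convention for $d^c$ on functions, the sign relation between $\hat{}$ and brackets, and the normalization of the wedge. As a sanity check I would compare with the Kähler case. Taking $f=v$ in the Kähler situation $u=v'$, $w=0$ should reproduce $\omega=-u\,dr\wedge\eta-v(x_1\wedge y_1+x_2\wedge y_2)$ up to the expected relation between a potential and its $dd^c$. The coefficients $-f''$ and $-f'$ do match the pattern of \eqref{omega} with $u=v'$, which confirms the signs.
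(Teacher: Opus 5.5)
Your route is the paper's: compute $d^cf|_{\gamma_r}=-f'\eta$, then evaluate $d$ of this $G$-invariant $1$-form on $(\xi,\hat A)$ and on $(\hat X_i,\hat Y_j)$. Your final values are all correct, including the vanishing of the remaining pairs, which the paper leaves implicit.

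One justification is wrong, though, and since sign bookkeeping is the whole content of this lemma it should be fixed. You claim that $\beta(\hat C)$ is constant on orbits, so that the derivative terms in $d\beta(\hat B,\hat C)$ drop. This is false: $h_*\hat C=\widehat{\Ad(h)C}$, so $\beta(\hat C)$ varies along the orbit. Moreover, literally dropping those terms gives $d\beta(\hat B,\hat C)=-\beta([\hat B,\hat C])=+\beta(\widehat{[B,C]})$, which would produce $+f'$ instead of $-f'$ on $x_i\wedge y_i$.

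The correct argument uses $\mathcal L_{\hat B}\beta=0$, which follows from the invariance of $f$ and $J$. This gives $\hat B(\beta(\hat C))=\beta([\hat B,\hat C])$. The two derivative terms therefore contribute $2\beta([\hat B,\hat C])$, and the total is $d\beta(\hat B,\hat C)=\beta([\hat B,\hat C])=-\beta(\widehat{[B,C]})$. That is the formula you wrote, and it is the $1$-form analogue of the one the paper quotes for $d\omega$.

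Similarly, in $d\beta(\xi,\hat B)$ you silently drop the term $\hat B(\beta(\xi))$. It does vanish, because $\beta(\xi)$ is $G$-invariant; in fact $\beta(\xi)\equiv 0$, since $J\xi$ is tangent to the orbits.

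Finally, in your sanity check the Kähler potential is not $v$ but a primitive of it: $dd^cf=\omega$ exactly when $f'=v$.
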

\begin{proof} Using $\mathcal L_{\hat B}f=0$ for every $B\in\ggg$, we have 
$$dd^cf(\xi,\hat A) = -\xi df(JA) = -f''(r),$$
\begin{align*}dd^cf(X_i,Y_i) &= - d^cf([\hat Y_i,\hat X_i]) = d^cf([\hat X_i,\hat Y_i])\\
{}&= -d^cf(\widehat{[X_i,Y_i]}) = d^cf(\hat A) = -df(J\hat A) \\
{}&= -f'(r).\end{align*}\end{proof} 

\subsection{The Lee form.} 
We compute the invariant Lee form $\theta$, whose restriction to the curve $\g_r$ is a linear combination $a(r) dr + b(r) \eta$ 
by a representation theory argument. 
By definition, 
$$d(\o^2) = \theta\wedge \o^2,$$
hence in our case $2d\o \wedge\o = \theta \wedge \o^2$.
Using \eqref{omega},\eqref{domega} and the fact that $u=v'$ for pluriclosed metrics, 
$$2 d\o \wedge \o = 2w\Bigl( \frac 1\T (\T w)' + \T\left(\frac w\T\right)'\Bigr)\ dr\wedge x_1\wedge x_2 \wedge y_1 \wedge y_2.$$
As $ \frac 1\T (\T w)' + \T(\frac w\T)' = 2w'$ we get 
$$2d\o \wedge \o = 4ww'\ dr\wedge x_1\wedge x_2 \wedge y_1 \wedge y_2.$$
On the other hand 
$$\theta\wedge\o^2 = (a dr+ b \eta)\wedge [2 v^2 x_1\wedge y_1 \wedge x_2 \wedge y_2 + 2 w^2 x_1\wedge x_2 \wedge y_1 \wedge y_2].$$
If we compare this with $d\o\wedge \o$ , we immediately see that $b\equiv 0$. Hence 
$$2ww' = -a(r)[v^2-w^2],$$
hence 
$$\theta = -\frac{2ww'}{v^2-w^2} dr.$$
As $J\theta(X)=-\theta(JX)$, we have 
$$J\theta = \frac{2ww'}{v^2-w^2} \eta.$$
Now we easily verify that 
$$d\eta = x_1\wedge y_1 + x_2 \wedge y_2,$$
so that 
$$d(J\theta) = \Bigl(\frac{2ww'}{v^2-w^2}\Bigr)' dr\wedge \eta + \frac{2ww'}{v^2-w^2} (x_1\wedge y_1 + x_2 \wedge y_2).$$
\subsection{The Bismut Ricci form} We put 
$$P(r) \coloneqq \frac{2ww'}{v^2-w^2}, \quad f(r) \coloneqq \log(||\Omega||^2).$$
then
$$\rho^B = \left(-\frac 12 f''+ P'\right)\ dr\wedge \eta + \left(P- \frac 12 f'\right)\ (x_1\wedge y_1 + x_2 \wedge y_2).$$
Therefore the equation $\rho^B=0$ reduces to 
$$P- \frac 12 f' = 0.$$
This can be written as 
$$\frac{2ww'}{v^2-w^2} - \frac 12\left( 4\coth(2r) - \frac{u'}{u} - \frac{2vv'- 2ww'}{v^2 - w^2}\right)=0.$$
If we now use the fact that $u=v'$, we get
$$\frac{2ww'}{v^2-w^2} - 2\coth(2r) +\frac 12 \frac{v''}{v'} + \frac{vv'- ww'}{v^2 - w^2}=0,$$
hence 
$$- 2\coth(2r) +\frac 12 \frac{v''}{v'} + \frac{vv'+ ww'}{v^2 - w^2}=0,$$
that can be written as 
\beq\label{equation}v'' + 2v'\frac{vv'+ ww'}{v^2 - w^2}- 4v'\coth(2r)=0.\eeq

If $w=0$, i.e., in the K\"ahler case, the equation becomes 
$$\frac{v''}{v'} + 2 \frac{v'}v - 4 \coth(2r) = 0.$$
This equation can be integrated explicitely to yield 
$$v(r) = a (\sh(4r) - 4r)^{1/3}, \quad a\in \mathbb R,$$
and this provides a metric homothetic to the Stenzel metric \cite{S} on $Q_3$.

\smallskip

If $c=1$, the solution is $v(r)=r$ and the corresponding metric is homothetic to the Chamseddine--Volkov/Maldacena-Nu\~nez's metric \cite{CV1,CV2,MN}, 
see Remark \ref{rem:linkphys} for more details.

\smallskip

In what follows,  we aim at proving the existence of a global solution $v\in C^\infty(\mathbb R)$ 
to the equation \eqref{equation} for every $w(r)$ as in \eqref{w}, satisfying the following conditions:
\begin{itemize}
\item[i)] $v$ extends smoothly across $r=0$ to a smooth odd function with $v'(0)=1$;
\item[ii)] $v'(r)>0$ for every $r\in \mathbb R$;
\item[iii)] $v(r)> |w(r)|$ for every $r>0$;
\item[iv)] if $(0,T)$ is the maximal existence interval for the solution $v$ satisfying i) and ii), then 
$$\int_0^T \sqrt{v'(r)}\ dr = +\infty.$$
\end{itemize}

\begin{remark} Note the condition iv) is precisely equivalent to the completeness of the resulting metric $g$ on $Q_3$, as $g(\xi,\xi) = u(r) = v'(r)$ and the completeness condition requires the length of the curve $\g_r$ to be infinite.  

Moreover, the condition $v'(0)=u(0)=1$ normalizes the metric as it fixes the induced homogeneous metric on the singualr orbit $S^3$: indeed in this case the tangent vectors $\hat A|_q,\hat Y_1|_q, \hat Y_2|_q$ have unitary norm. \end{remark}

\section{Existence and properties of the solutions}

\subsection{Existence of a local solution around $r=0$.} 
In this section, we prove the local existence of a solution to \eqref{equation}  
with the required behaviour. 
We shall use the following parameter-dependent version of the
Briot--Bouquet theorem regarding singular first-order partial differential systems.
\begin{theorem}[Briot--Bouquet system with parameters]\label{Liparameter}
Let $F:\C\times\C^q\times\C^m\to \C^q$,
$$F = F(r,U,\lambda),$$
be holomorphic in a neighborhood of $(0,0,0)\in\C\times\C^q\times\C^m$, and assume that
$$
F(0,0,\lambda)=0
$$
for $\lambda$ sufficiently close to $0$.  Let
$$
A=\partial_UF(0,0,0).
$$
If $A$ has no eigenvalue which is a positive integer, then, after possibly shrinking the neighbourhoods of the origin, the
system
$$
r\frac{\partial U}{\partial r}=F(r,U,\lambda)
$$
admits a unique solution $U=U(r,\lambda)$ which is holomorphic in $(r,\lambda)$ and satisfies
$$
U(0,\lambda)=0.
$$
\end{theorem}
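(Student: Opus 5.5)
This is the classical Briot--Bouquet theorem with holomorphic dependence on parameters, so the plan is to reduce it to a fixed-point problem for formal power series and prove convergence by majorants. First, write
\[
F(r,U,\lambda)=A(\lambda)U+r\,b(\lambda)+G(r,U,\lambda),
\]
using $F(0,0,\lambda)=0$. Here $A(\lambda)=\partial_UF(0,0,\lambda)$, $b(\lambda)=\partial_rF(0,0,\lambda)$, and $G$ collects the terms of total degree at least two in $(r,U)$, with coefficients holomorphic in $\lambda$. Since $A(0)=A$ has no eigenvalue in $\mathbb{Z}_{>0}$, continuity of the spectrum gives the following. After shrinking the $\lambda$-neighbourhood, there is $\delta>0$ with $|k-\mu|\ge\delta\,k$ for every eigenvalue $\mu$ of $A(\lambda)$ and every integer $k\ge1$. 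Indeed, for $k$ large the inequality holds automatically because the spectrum is bounded, and for the finitely many small $k$ it follows from continuity. Consequently the matrices $kI-A(\lambda)$ are invertible, with $\|(kI-A(\lambda))^{-1}\|\le C/k$ uniformly in $k\ge1$ and in $\lambda$.

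\textbf{Formal solution.} Next, I would look for $U(r,\lambda)=\sum_{k\ge1}U_k(\lambda)r^k$. Substituting into $rU_r=F$ and comparing coefficients of $r^k$ gives
\[
(kI-A(\lambda))U_k=\delta_{k1}\,b(\lambda)+P_k\bigl(U_1,\dots,U_{k-1};\lambda\bigr),
\]
where $P_k$ is a polynomial in the earlier coefficients whose own coefficients are Taylor coefficients of $G$, hence holomorphic in $\lambda$. Since $kI-A(\lambda)$ is invertible, this recursion determines the $U_k(\lambda)$ uniquely and holomorphically. This settles uniqueness among power-series solutions vanishing at $r=0$. Because every holomorphic solution with $U(0,\lambda)=0$ has such an expansion, it also settles uniqueness in the holomorphic class.

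\textbf{Convergence (the main step).} I would use Cauchy majorants uniformly in $\lambda$. Choose $M,\rho$ such that $G$ is dominated, uniformly for $\lambda$ in a compact polydisc, by the scalar series
\[
\widehat G(r,y)=\frac{M(r+qy)^2}{\rho\,(\rho-r-qy)},
\]
and such that $|b(\lambda)|\le M$. Using $\|(kI-A)^{-1}\|\le C/k\le C$, I would show by induction that $|U_k(\lambda)|\le y_k$. Here $y(r)=\sum_k y_kr^k$ is the solution of the scalar analytic implicit equation
\[
y=C\bigl(Mr+\widehat G(r,y)\bigr),\qquad y(0)=0.
\]
The partial derivative in $y$ of the right-hand side vanishes at $(0,0)$, so the holomorphic implicit function theorem gives a convergent solution $y$. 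Its coefficients are nonnegative and dominate the recursion, since each $P_k$ has nonnegative majorant coefficients. Hence $\sum_kU_k(\lambda)r^k$ converges for $|r|<r_0$, with $r_0$ independent of $\lambda$. A locally uniform limit of functions holomorphic in $(r,\lambda)$ is holomorphic, which gives joint holomorphy.

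I expect the main obstacle to be the uniform lower bound on $kI-A(\lambda)$. This is exactly where the non-resonance hypothesis and the shrinking in $\lambda$ enter. The majorant bookkeeping is standard, since the factor $1/k$ is not even needed.
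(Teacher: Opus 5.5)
Your argument is correct, but it is not the route taken in the paper. The authors give no proof of this statement. They observe that it is a special case of \cite[Theorem~1.2]{L}, a theorem on singular first-order systems of Briot--Bouquet type in which an auxiliary holomorphic variable carries no derivatives, and they take $\lambda$ to be that variable.

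You instead give the classical self-contained proof. The formal recursion
\[
(kI-A(\lambda))U_k=\delta_{k1}b(\lambda)+P_k(U_1,\dots,U_{k-1};\lambda)
\]
yields the formal solution, uniqueness in the holomorphic class, and holomorphy of each $U_k$ in $\lambda$. A Cauchy majorant estimate, uniform in $\lambda$, then gives convergence and joint holomorphy.

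The two approaches buy different things. The citation is shorter and inherits whatever generality the reference has. However, it leaves implicit how non-resonance at the single point $\lambda=0$ turns into uniform control on a neighbourhood. Your proof makes this explicit, and that is exactly what the clause ``after possibly shrinking'' refers to. You also correctly note that the gain $1/k$ is not needed, since $F$ involves no $\lambda$-derivatives and $G$ vanishes to second order in $(r,U)$.

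One step needs an extra line. A lower bound $|k-\mu|\ge\delta k$ on the eigenvalues does not by itself bound $\|(kI-A(\lambda))^{-1}\|$ for non-normal matrices. Either of the following closes it:
\begin{enumerate}
\item Use the adjugate estimate $\|B^{-1}\|\le C_q\|B\|^{q-1}/|\det B|$ together with $|\det(kI-A(\lambda))|\ge(\delta k)^q$.
\item Use a Neumann series for $k>2\sup_\lambda\|A(\lambda)\|$, and compactness in $\lambda$ for the finitely many remaining $k$.
\end{enumerate}
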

This result is an immediate special case of
\cite[Theorem~1.2]{L}, where the parameter $\lambda$ can be seen as the auxiliary variable $x$ in \cite{L}.\par

\smallskip

We set 
\beq\label{wc}w_c(r) \coloneqq  c \left(r\coth(2r)-\frac12\right),\quad r\in\mathbb R\eeq
and we prove the following. 

\begin{proposition}\label{local}
Fix $c\in\mathbb R$. Then the singular ODE 
$$v''+2v'\frac{vv'+w_cw_c'}{v^2-w_c^2}-4v'\coth(2r)=0,\qquad
v(0)=0,\qquad v'(0)=1,$$
admits a unique analytic solution $v_c$ in a neighbourhood of the origin. 
This solution is odd. Moreover, $v_c$ depends analytically on the parameter $c$.
\end{proposition}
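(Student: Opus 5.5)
The plan is to recast the singular second–order ODE as a first–order Briot--Bouquet system to which Theorem \ref{Liparameter} applies, with the parameter $\lambda=c-c_0$ near an arbitrary fixed $c_0\in\mathbb R$, complexified.

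\emph{Regularity of $w_c$.} Since $r\coth(2r)=\frac12+\frac23 r^2+O(r^4)$ is even and analytic near $0$, we can write $w_c(r)=c\,r^2\varphi(r^2)$ with $\varphi$ holomorphic near $0$ and $\varphi(0)=\frac23$. Similarly, $2r\coth(2r)=1+r^2\psi(r^2)$ with $\psi$ holomorphic.

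\emph{Change of unknowns.} Write $v=r\,s$ with $s(0)=1$, and set
$$U_1=s-1,\qquad U_2=r s' .$$
Then $v'=s+U_2$ and $r v''=2U_2+r^2s''$. Multiplying \eqref{equation} by $r$ gives
$$r v''=-2v'\Bigl[\frac{r\,vv'}{v^2-w_c^2}+\frac{r\,w_cw_c'}{v^2-w_c^2}-2r\coth(2r)\Bigr].$$
Inserting the expressions above:
$$\frac{r\,vv'}{v^2-w_c^2}=\frac{s(s+U_2)}{s^2-c^2r^2\varphi^2},$$
and $\frac{r\,w_cw_c'}{v^2-w_c^2}=O(r^2)$ is a holomorphic function of $(r,s,c)$ near $s=1$, because the factor $r^2$ in $v^2-w_c^2=r^2(s^2-c^2r^2\varphi^2)$ cancels against $r\,w_cw_c'=O(r^4)$. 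Hence the system
$$rU_1'=U_2,\qquad rU_2'=U_2+r^2s''=-U_2+r v''$$
takes the form $rU'=F(r,U,c)$ with $F$ holomorphic near $(0,0,c_0)$.

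\emph{Checking the hypotheses.} At $r=0$, $U=0$ the bracket equals $1+0-1=0$, so $F(0,0,c)=0$ for all $c$. For the linearization at $r=0$, the bracket reduces to $(s+U_2)/s-1=U_2/s$. Therefore $F_2=-U_2-2(s+U_2)U_2/s+O(r)$, which gives
$$A=\partial_UF(0,0,c_0)=\begin{pmatrix}0&1\\0&-3\end{pmatrix},$$
with eigenvalues $0$ and $-3$. Neither is a positive integer, so Theorem \ref{Liparameter} yields a unique solution $U(r,c)$, holomorphic in $(r,c)$, with $U(0,c)=0$. Then $v_c=r(1+U_1)$ is analytic, satisfies $v_c(0)=0$, and $v_c'(0)=1+U_1(0)+U_2(0)=1$. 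Analyticity in $c$ near every $c_0$ gives analyticity on all of $\mathbb R$.

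\emph{Uniqueness and oddness.} Any analytic solution $v$ with $v(0)=0$, $v'(0)=1$ has the form $v=rs$ with $s$ analytic and $s(0)=1$. The corresponding $U$ is analytic with $U(0)=0$, so it coincides with the solution above. For oddness, \eqref{equation} is invariant under $r\mapsto -r$, $v\mapsto -v$: here $w_c$ is even, $w_c'$ is odd, $\coth$ is odd, and each term of the equation transforms consistently. Thus $\tilde v(r)=-v_c(-r)$ is again an analytic solution with $\tilde v(0)=0$ and $\tilde v'(0)=1$, and uniqueness forces $\tilde v=v_c$.

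\emph{Main obstacle.} The delicate step is verifying holomorphy of $F$: one must check that all apparent singularities $1/r$ and $1/(v^2-w_c^2)$ cancel exactly with the chosen normalization $v=rs$. One must also compute the spectrum of $A$ correctly, since a positive integer eigenvalue would destroy uniqueness.
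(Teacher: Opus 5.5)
Your proof is correct and is essentially the paper's argument: you rewrite the equation as a Briot--Bouquet system in $v/r-1$ and a second unknown, note that the linearization is independent of $c$ with eigenvalues $0$ and $-3$, apply the parametric Briot--Bouquet theorem near each $c_0$, and deduce oddness from uniqueness. The differences are cosmetic. Your $U_2=rs'=v'-v/r$ is related to the paper's $z=v'-1$ by $z=U_1+U_2$, which conjugates your matrix to the paper's $\left(\begin{smallmatrix}-1&1\\2&-2\end{smallmatrix}\right)$, and you check the symmetry $v\mapsto -v(-r)$ on the second-order equation rather than the evenness of the first-order system.
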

\begin{proof} 
Set
$$f(r)\coloneqq \frac{v(r)}r,\qquad p(r) \coloneqq v'(r),$$
and introduce
$$\alpha(r) \coloneqq r\coth(2r),\qquad h_c(r) \coloneqq \frac{w_c(r)}{r} = c\left(\coth(2r)-\frac1{2r}\right).$$
Note that  $\alpha$ is analytic and even, while $h_c$ is analytic and odd.\par
Since
$$p=f+rf',$$
we have
\begin{equation}\label{equa1}
rf'=p-f.
\end{equation}
Moreover,
$$v^2-w_c^2=r^2(f^2-h_c^2)$$
and
$$vv'+w_cw_c'=r\left[fp+h_c(h_c+rh_c')\right].$$
Consequently, multiplying the singular ODE of the statement by $r$, we obtain
\begin{equation}\label{equa2} rp'=4\ p\  \alpha(r)-2p\,\frac{fp+h_c(h_c+rh_c')}{f^2-h_c^2}.
\end{equation}
The initial conditions are
$$f(0)=p(0)=1.$$
Put
$$y=f-1,\qquad z=p-1.$$
Then \eqref{equa1}--\eqref{equa2} take the
Briot--Bouquet form
$$r\frac{d}{dr}\begin{pmatrix}y\\ z\end{pmatrix} =F(r,y,z,c),
\qquad F(0,0,0,c)=\begin{pmatrix}0\\0\end{pmatrix},$$
where 
\[
F(r,y,z,c) \coloneqq \begin{pmatrix} -y+z \\ 4(z+1)\a - 2 (z+1)\frac{(y+1)(z+1)+ h_c(h_c+rh_c')}{(y+1)^2-h_c^2} \end{pmatrix}
\] 
is globally analytic in all its variables in a suitable neighborhood of $(0,0,0,c)$.\par

The linearization at $(r,y,z)=(0,0,0)$ is independent of $c$.  Indeed,
the first equation gives
$$ry'=-y+z,$$
while, since
$$\alpha(0)=\frac12,\qquad h_c(0)=0,$$
the linear part of the second equation is
$$rz'=2y-2z.$$
Hence
$$D_{(y,z)}F(0,0,0,c)= \begin{pmatrix}-1&1\\2&-2\end{pmatrix},$$
whose eigenvalues are $0,-3$ and therefore we can apply Theorem \ref{Liparameter}. 
Hence for every
$c_0\in\mathbb R$, there exist $\varepsilon,\delta>0$ and a unique solution
$$(f(r,c),p(r,c)),\qquad |r|<\delta,\quad |c-c_0|<\varepsilon,$$
which is real analytic in $(r,c)$.  Hence
$$v_c(r)=r f(r,c)$$
depends real analytically on $(r,c)$ near $(0,c_0)$. 
In particular, after possibly decreasing $\delta$, the family depends real analytically on $c$ on the fixed interval $[-\delta,\delta]$.
Therefore, if $c\to c_0$,
$$v_c\longrightarrow v_{c_0}\qquad\text{in }C^\infty([-\delta,\delta]).$$

Finally, the parity of the solution follows from uniqueness. Indeed, $\alpha(r)$ is even and
$$h_c(r)^2,\qquad h_c(r)\bigl(h_c(r)+rh_c'(r)\bigr)$$
are even functions of $r$.  Hence the right-hand side of the Briot--Bouquet system is invariant under the symmetry $r\mapsto-r$.  Therefore
$$(f(-r,c),p(-r,c))$$
is a solution with the same initial data as $(f(r,c),p(r,c))$.
Uniqueness implies that
$$f(-r,c)=f(r,c),\qquad p(-r,c)=p(r,c).$$
Thus $f$ and $p$ are even and
$$v_c(-r)=-v_c(r).$$
\end{proof}

\subsection{Long time existence of the solutions.} In this section we prove the following 
\begin{theorem}\label{thm:completeness}
The solution $v_c$ to the equation \eqref{equation} with initial condition $v_c(0)=0,\ v_c'(0)=1$ and $w_c$ given by \eqref{wc} satisfies the following properties
\begin{itemize}
\item[i)] $v_{-c}=v_{c}$ for every $c$;
\item[ii)] if $c=\pm 1$ the solution $v_1(r)=r$ for all $r\in\mathbb R$;
\item[iii)]  if $|c|<1$, then the existence interval of $v_c$ is the whole $\mathbb R$;
\item[iv)] if $|c|<1$ we have
$$
v_c\sim M_1 e^{\frac 43 r},\quad v_c' \sim M_2 e^{\frac 43 r},\qquad r\gg 0,
$$
so that 
$$\int_0^{+\infty} \sqrt{v'(s)}\ ds =+\infty$$
and the metric $g_c$ defined by $v_c$ is complete.
\item[v)] 

If $|c|>1$, then the maximal positive-definite solution has a
finite endpoint $T>0$, and
\[
\lim_{r\to T^-}v_c(r)=|w_c(T)|,\qquad
\lim_{r\to T^-}v_c'(r)=0.
\]
The corresponding metric on the domain $\{r<T\}$ is incomplete
and does not extend to a BHE metric on all of $Q_3$. 

\end{itemize}
\end{theorem}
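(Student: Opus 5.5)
The plan is to turn \eqref{equation} into a first integral and then argue by comparison with the explicit solution at $c=1$. Writing $w=w_c$, I would use
\[
\frac{2(vv'+ww')}{v^2-w^2}=\bigl(\log(v^2-w^2)\bigr)'+\frac{4ww'}{v^2-w^2}.
\]
With this, \eqref{equation} becomes $\bigl(\log\bigl(v'(v^2-w^2)\bigr)\bigr)'=4\coth(2r)-\frac{4ww'}{v^2-w^2}$. Integrating from $0$ and using $v\sim r$, $w=O(r^2)$ and $\sh(2r)\sim 2r$ to fix the constant gives
\begin{equation*}
v'(v^2-w^2)=\tfrac14\,\sh^2(2r)\,E_c(r),\qquad E_c(r)=\exp\Bigl(-4\int_0^r\frac{ww'}{v^2-w^2}\,ds\Bigr).
\end{equation*}
This identity holds on any interval $(0,T)$ where $v>|w|$. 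There $v'>0$ automatically, and $E_c$ is nonincreasing because $ww'=c^2 w_1w_1'\ge0$.

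Items i) and ii) are quick. The equation involves $c$ only through $w^2$ and $ww'$, both even in $c$, so the uniqueness in Proposition \ref{local} gives $v_{-c}=v_c$. For ii) I would substitute $v=r$, $w=w_1=r\coth(2r)-\tfrac12$ into \eqref{equation} and check directly the identity $2(r+w_1w_1')=4\coth(2r)(r^2-w_1^2)$. Local uniqueness at $0$ plus standard ODE uniqueness away from the singular point then extend $v_1=r$ to all of $\R$.

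For iii) and iv) the core step, and the one I expect to be the main obstacle, is a barrier argument showing that for $|c|<1$ the solution satisfies $v_c(r)\ge r>|w_c(r)|$ for all $r>0$. The strict inequality $w_1<r$ is elementary, since $r(\coth 2r-1)<\tfrac12$, and $|w_c|=|c|\,w_1$.
\begin{itemize}
\item I would first compare the Taylor expansions at $0$: the $r^3$ coefficient of $v_c$ should increase as $c^2$ decreases, which gives $v_c>r$ near $0$ when $|c|<1$.
\item Then I would argue at a hypothetical first touching point $r_0$ with $v_c(r_0)=r_0$ and $v_c'(r_0)\le1$. The first integral for $v_c$ and for $v_1$ should produce a contradiction, because $v_c^2-w_c^2>r^2-w_1^2$ there and the exponential factors compare favourably.
\item If this direct comparison is too delicate, I would instead study $\phi=v_c-r$ and the linear inequality it satisfies.
\end{itemize}
Once $v_c>|w_c|$ holds wherever $v_c$ exists, global existence follows. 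Indeed $v'=\sh^2(2r)E_c/(4(v^2-w^2))$ stays bounded on bounded intervals as long as $v^2-w^2\ge r^2-w_1^2>0$, so no blow-up or degeneration can occur.

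For the asymptotics in iv), since $v_c\ge r$ and $v'>0$, the first integral with $E_c\le1$ and $v^2-w^2\ge r^2-w_1^2\sim r^2$ forces $v'$ to grow exponentially. A bootstrap then gives $v_c\ge Ce^{\kappa r}$, so $\int^\infty ww'/(v^2-w^2)<\infty$ and $E_c\to E_\infty>0$. Hence $v'v^2=\tfrac1{16}E_\infty e^{4r}(1+o(1))$. Integrating gives $v^3\sim\tfrac{3}{64}E_\infty e^{4r}$, so $v\sim M_1e^{4r/3}$ and $v'\sim M_2e^{4r/3}$ with $M_2=\tfrac43M_1$. Completeness is then immediate from $\int^\infty e^{2r/3}\,dr=\infty$.

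For v) with $|c|>1$, I would use the reverse barrier, $v_c<r$ near $0$ and hence for all $r$ while $v_c>|w_c|$. Since $|w_c|=|c|w_1\sim|c|r$ with $|c|>1$, the difference $v_c-|w_c|$ must vanish at some finite $T$. Near $T$ the integral in $E_c$ diverges like $\int ds/(v^2-w^2)$, which forces $E_c\to0$. Combined with the first integral, this gives $v'\to0$ and $v\to|w_c(T)|$, and $g_c$ degenerates at finite distance, so the metric is incomplete.
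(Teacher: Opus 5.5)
Your overall scheme is sound: the first integral $v'(v^2-w^2)=\tfrac14\sinh^2(2r)E_c$, a barrier comparing $v_c$ with $v_1=r$, then boundedness of $v'$ on bounded intervals. Items i), ii) and the asymptotic constants in iv) are fine. The genuine gap is the barrier itself, on which iii) and v) rest. You only assert it, and the reason you give points the wrong way. Take a first touching point $r_0$, with $v_c(r)>r$ on $(0,r_0)$, $v_c(r_0)=r_0$ and $v_c'(r_0)\le1$. There the two first integrals give exactly
\[
v_c'(r_0)=\frac{E_c(r_0)}{E_1(r_0)}\cdot\frac{r_0^2-w_1(r_0)^2}{r_0^2-c^2w_1(r_0)^2}.
\]
The inequality $v_c^2-w_c^2>r^2-w_1^2$ that you cite makes the second factor smaller than $1$. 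So it pushes towards $v_c'(r_0)<1$, which is no contradiction. Only $E_c>E_1$ helps, and you must show it wins. It does. Bound $E_c$ from below using $v_c\ge s$ on $[0,r_0]$, and write the logarithm of the second factor as an integral from $0$. This gives
\[
\log v_c'(r_0)\ \ge\ 2\int_0^{r_0}\bigl(\psi_s(1)-\psi_s(c^2)\bigr)\,ds,\qquad
\psi_s(t)=\frac{s+t\,w_1w_1'}{s^2-t\,w_1^2}\quad (w_1=w_1(s)),
\]
with $\partial_t\psi_s=s\,w_1\,(s\,w_1'+w_1)/(s^2-t\,w_1^2)^2>0$, hence $v_c'(r_0)>1$. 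The same computation with all inequalities reversed gives the reverse barrier for $|c|>1$, which you also only asserted. It is legitimate there because $s\ge v_c>|c|w_1$ on $(0,r_0]$.

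The paper instead uses a barrier on the derivative. At the first $\bar r$ where $v_c'(\bar r)=1$, the equation gives $v_c''(\bar r)=2A_c/(v_c^2-w_c^2)$. The identity $2(r^2-w_1^2)\coth(2r)=r+w_1w_1'$ turns $A_c$ into $(1-c^2)(2\coth(2r)w_1^2+w_1w_1')+(4r\coth(2r)-1)h+2\coth(2r)h^2$, with $h=v_c-r$. Its sign contradicts $v_c''(\bar r)\le0$, or $v_c''(\bar r)\ge0$ when $|c|>1$. Your completed version treats both regimes in one stroke, whereas the paper needs an extra sign analysis of the quadratic in $h$ for $|c|>1$.

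Two further steps need repair.

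In iv), the bounds $E_c\le1$ and $v_c^2-w_c^2\ge r^2-w_1^2$ only bound $v_c'$ from above, so they cannot force growth. You need the lower bound $E_c(r)\ge K r^{-4c^2/(1-c^2)}$. It follows from $v_c^2-w_c^2\ge(1-c^2)r^2$ and $w_cw_c'\le c^2 r$. Then $(v_c^3)'\ge 3(v_c^2-w_c^2)v_c'=\tfrac34\sinh^2(2r)E_c$ gives the exponential lower bound on $v_c$. After that, your argument for $E_c\to E_\infty>0$ goes through.

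In v), $E_c\to0$ together with $v_c^2-w_c^2\to0$ does not give $v_c'\to0$, since the first integral is then a $0/0$ form. Likewise, divergence of $\int^T ds/(v_c^2-w_c^2)$ needs an upper bound on $v_c'$. Use instead
\[
(\log v_c')'\le 4\coth(2r)-\frac{2w_cw_c'}{v_c^2-w_c^2}.
\]
Dropping the last term bounds $v_c'$ near $T$, so $v_c^2-w_c^2$ vanishes at most linearly at $T$. Keeping it, with $w_cw_c'$ bounded below by a positive constant near $T$, drives $\log v_c'\to-\infty$. The paper's own justification of this limit, from $0<F\le\frac14$ alone, is also insufficient.
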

\begin{proof} 
As we already recalled, when $c=0$ the solution to \eqref{equation} is given by $v_0 = a (\sh(4r) - 4r)^{1/3}$, for some $a\in \mathbb R$, 
and corresponds to Stenzel's K\"ahler Ricci-flat metric. In such a case, statements iii) and iv) are immediate. 
From now on, we will then assume $c\neq0$. 

\begin{enumerate}[i)]
\item The equation contains only terms involving $w_c^2$ and $w_cw_c'$, hence is dependent only on $c^2$. Both $v_c$ and $v_{-c}$ satisfy the same equation with the same initial conditions and therefore coincide by uniqueness. \par

\item This is a straightforward calculation.\par 

\smallskip

We now state a general fact. Let $[0,T)$ be the maximal interval, with $T\in \mathbb R\cup\{+\infty\}$, 
on which the solution extends smoothly from the origin and
$v_c(r)^2-w_c(r)^2>0$ for $0<r<T$. 
We claim that $v_c'(r)>0$ for every $r\in (0,T)$. We know that there exists an interval $[0,\d]$ ($\d>0$) where $v_c'>0$ by continuity. 
Let $r_o \coloneqq\sup\{t\in [0,T)|\ v_c'(s)>0,\ s\in [0,t]\}$ and suppose $r_o<T$ so that $v_c'(r_o)=0$. Then for all $r\in [0,r_o)$ we have 
$$\frac{v_c''}{v_c'} = -2\frac{v_cv_c'+w_cw_c'}{v_c^2-w_c^2} + 4\coth (2r),$$
and by integration 
$$\log v_c'(r) - \log v_c'(\d) = \int_\d^r  \left(-2\frac{v_cv_c'+w_cw_c'}{v_c^2-w_c^2} + 4\coth (2s)\right) ds.$$
When $r\to r_o$, the right-hand side converges to a finite number, while $\log v_c'\to -\infty$, a contradiction. \par \smallskip

\item Using the equation, we can write the Taylor expansion of $v_c$ at $r=0$ and we obtain 
\beq\label{taylor}v_c'(r) = 1 + \frac 45 (1-c^2) r^2 + o(r^3)\eeq
and therefore when $|c|<1$, the function $v_c'(r)>1$ in a suitable interval $(0,\varepsilon)$. 
We now claim that $v_c'(r)>1$ on the whole interval $(0,T)$. 
If not, let $\bar r$ be the first point where $v_c'(\bar r)=1$. This implies that $v_c''(\bar r)\leq 0$. 
On the other hand, $v_c(r)\geq r$ on $[0,\bar r]$ and therefore we can write $v_c(r)=r+h(r)$ with $h(r)\geq 0$. A calculation shows that 
at the point $\bar r$ we have 
\beq\label{v''}
v_c''= \frac 2{v_c^2-w_c^2}\left[2(v_c^2-w_c^2) \coth(2\bar r) - v_c - w_cw_c'\right],
\eeq
so that if we put 
$$
A_c(r) \coloneqq \left[2(v_c^2-w_c^2)\coth(2 r) - v_c - w_cw_c'\right],
$$
and substitute $v_c(r) = r+ h(r)$, where $h$ is positive, we obtain 
\beq\label{Ac}A_c(r) = (1-c^2)(2\coth(2r)w_1^2+ w_1w_1') + (4r\coth(2r)-1)h + 2\coth(2r)h^2.\eeq

Since $w_1(r)>0$ and
\[
w_1'(r)=\coth(2r)-2r\frac{1}{\sinh^2(2r)}
=\frac{\frac12\sinh(4r)-2r}{\sinh^2(2r)}>0,
\qquad r>0,
\]

and $4r\coth(2r)-1>0$, we see that 
$$
A_c(\bar r) > 0,
$$
and therefore by \eqref{v''}, we see that $v_c''(\bar r)>0$, a contradiction. 

We also note that $w_1'(r)<1$ for $r>0$, as
\[
1-w_1'(r)
=\frac{2r-\frac12(1-e^{-4r})}{\sinh^2(2r)}>0.
\]

Since
\[
w_1(r)-r=\frac{2r}{e^{4r}-1}-\frac12<0,
\qquad r>0,
\]
we have, for $|c|<1$,
\[
v_c(r)>r>|w_c(r)|,\qquad0<r<T.
\]
In particular,
\begin{equation}\label{estimate}
v_c(r)^2-w_c(r)^2>(1-c^2)r^2.
\end{equation}
Equation \eqref{equation} implies
\[
\frac{d}{dr}\log\left(
\frac{v_c'(v_c^2-w_c^2)}{\sinh^2(2r)}\right)
=-\frac{4w_cw_c'}{v_c^2-w_c^2}.
\]
Define
\[
F(r)\coloneqq \frac{v_c'(r)(v_c(r)^2-w_c(r)^2)}{\sinh^2(2r)}.
\]
The initial expansions give $\lim_{r\to0^+}F(r)=\frac14$,
and integration yields
\begin{equation}\label{F}
F(r)=\frac14\exp\left(-\int_0^r
\frac{4w_c(s)w_c'(s)}{v_c(r)^2-w_c(r)^2}\,ds\right).
\end{equation}
In particular, $0<F(r)\leq\frac14$.

Suppose now, by contradiction, that $T<+\infty$. For any fixed $\delta\in(0,T)$,
\[
v_c(r)^2-w_c(r)^2\geq(1-c^2)\delta^2,
\qquad\delta\leq r<T,
\]
and therefore
\[
0<v_c'(r)
=\frac{F(r)\sinh^2(2r)}{v_c(r)^2-w_c(r)^2}
\leq\frac{\sinh^2(2T)}{4(1-c^2)\delta^2}.
\]
Thus $v_c$ and $v_c'$ remain bounded near $T$, while
$v_c(r)^2-w_c(r)^2$ stays bounded away from zero. The differential
equation consequently extends the solution beyond $T$,
a contradiction. Hence $T=+\infty$. The odd extension gives
the solution on all of $\mathbb R$.

\item 
Set
\[
\alpha\coloneqq \frac{4c^2}{1-c^2}.
\]
Since $0<w_1(r)<r$ and $0<w_1'(r)<1$ for $r>0$, we have
$0\leq w_c(r)w_c'(r)\leq c^2r$. Hence, by \eqref{estimate} and \eqref{F},
\[
\frac{F'(r)}{F(r)}
=-\frac{4w_c(r)w_c'(r)}{v_c(r)^2-w_c(r)^2}
\geq-\frac{\alpha}{r}.
\]
Fix $R>0$. Integrating on $[R,r]$ gives a constant $K>0$ such that
\[
F(r)\geq Kr^{-\alpha},\qquad r\geq R.
\]
For sufficiently large $r$, using $ v_c(r)^2-w_c(r)^2\leq v_c(r)^2$ and
$\sinh(2r)\geq\frac13e^{2r}$, we obtain
\[
(v_c^3)'=3v_c^2v_c'
=3F\sinh^2(2r)\frac{v_c^2}{v_c^2-w_c^2}
\geq K_1r^{-\alpha}e^{4r}
\]
for some $K_1>0$. Integrating over $[r-1,r]$ yields
\[
v_c(r)\geq K_2r^{-\frac13\alpha}e^{\frac43r},\qquad r\gg1,
\]
where $K_2>0$. Since $w_c(r)=O(r)$, it follows that
$w_c(r)^2/v_c(r)^2\to0$ and, in particular,
\[
v_c(r)^2-w_c(r)^2\geq\frac12v_c(r)^2,\qquad r\gg1.
\]
Consequently,
\[
\int_R^{+\infty}\frac{w_c(r)w_c'(r)}{v_c(r)^2-w_c(r)^2}\,dr
\leq K_3\int_R^{+\infty}
r^{1+\frac23\alpha}e^{-\frac83 r}\,dr<+\infty,
\]
after increasing $R$ if necessary. The integrand in \eqref{F}  is
$O(r)$ at the origin. Therefore \eqref{F} implies
\[
\lim_{r\to+\infty}F(r)=L_c>0.
\]
As $(v_c^2-w_c^2))/v_c^2\to1$, we now have
\[
v_c^2v_c'
=F\sinh^2(2r)\frac{v_c^2}{v_c^2-w_c^2}
\sim\frac{L_c}{4}e^{4r}.
\]
Integration gives
\[
v_c(r)^3\sim\frac{3L_c}{16}e^{4r},
\]
and hence
\[
v_c(r)\sim M_1e^{\frac43 r},\qquad
v_c'(r)\sim M_2e^{\frac43 r},
\qquad
M_1=\left(\frac{3L_c}{16}\right)^{1/3},\quad
M_2=\frac43M_1.
\]
Note that argument also includes $c=0$, in which case
$\alpha=0$ and $F\equiv\frac14$. Finally, the behaviour of $v_c'$ implies
\[
\int_0^{+\infty}\sqrt{v_c'(r)}\,dr=+\infty,
\]
so the corresponding metric is complete.

\item

We use the same kind of arguments we proved in iii). 
As $|c|>1$, we know that $v_c(r)'<1$ when $r\neq 0$ in a suitable neighborhood of the origin. We claim that $v_c'< 1$ on the maximal existence interval $(0,T)$. Indeed, if $\bar r$ is the first point where $v_c'(\bar r)=1$, we have $v_c''(\bar r)\geq 0$ and we will prove that $v_c''(\bar r)<0$, giving a contradiction. 
On the interval $(0,\bar r)$ we have 
$$w_c(r) < v_c(r) < r,$$ 
and therefore if we set $v_c(r) =r+h(r)$, then $h(r)$ is negative with $|h|< r-w_c(r)$. We now use the same argument as in iii) and consider the quantity $A_c(r)$ as in \eqref{Ac}. We claim that $A_c(\bar r)<0$. Indeed, the first term $(1-c^2) (2\coth(2r)w_1^2+ w_1w_1')|_{r=\bar r}$ is clearly negative, while the second term 
$$B(h) \coloneqq  (4r\coth(2\bar r)-1)h + 2\coth(2\bar r)h^2$$
has roots $h_1=0$ and $h_2=\frac 12 {\rm{T}}(2\bar r) - 2\bar r$. 
We now show that $h(\bar r)=v_c(\bar r)-\bar r$ belongs to the interval $(\frac 12 {\rm{T}}(2\bar r) - 2\bar r,0)$, proving that $B(h(\bar r))<0$ and therefore $A_c(\bar r)<0$, proving our claim. Indeed, $h(\bar r) > w_c(\bar r)-\bar r$ and 
$$w_c(\bar r)-\bar r > h_2= \frac 12 {\rm{T}}(2\bar r) - 2\bar r,$$
as 
$$w_c(\bar r) + \left(\bar r - \frac 12 {\rm{T}}(2\bar r)\right) >0.$$
Therefore, $v_c'< 1$ for every $r\in (0,T)$ and therefore $w_c<v<r$ on $(0,T)$. Suppose now $T=+\infty$. As $\lim_{r\to +\infty} w_c(r)-r =+\infty$ 
in the case $|c|>1$, we see that there esists a point $r_1$ where $v_c(r_1)=w_c(r_1)$ and $T=r_1$, a contradiction. Moreover  
$$v_c' = F\ \frac{\sinh^2(2r)}{v^2-w_c^2}$$
and $F$ is bounded with $0< F(r)\leq \frac 14$ by \eqref{F}, so that 
$$\lim_{r\to T} v_c'(r) = 0.$$
This implies that the integral $\int_o^T\sqrt{v'(s)}\ ds <+\infty$ and the metric defined by this solution on the interval $(0,T)$ is not complete.

\end{enumerate}
\end{proof}

\begin{remark}\label{rem:holiso}
    The metrics $g_c$ and $g_{-c}$ are holomorphically isometric for all $c\neq 0$. Indeed, $v_c=v_{-c}$ by i) of the previous theorem, and 
    the complex linear reflection $R = \diag(-1,1,1,1)\in\mathrm{O}(4)$ satisfies $R^*g_{c}=g_{-c}$, 
    as it reverses the sign of $x_1\odot y_2 - x_2\odot y_1$. 
\end{remark}

Summing up, by Lemma \ref{lem:extend} and the discussion in Section \ref{sect.rho}, 
every smooth $\mathrm{SO}(4)$-invariant BHE metric on $Q_3$, after a positive rescaling, 
is determined by a parameter $c\in\R$ and a smooth solution $v$ of \eqref{equation} with $v(0)=0$ and $v'(0)=1$. 
By Proposition \ref{local} and ordinary ODE uniqueness on the regular part, this solution coincides with $v_c$. 
Since the metric is defined in all of $Q_3$, Theorem \ref{thm:completeness} forces $|c| \leq 1$, 
and Remark \ref{rem:holiso} allows us to restrict to $c\in[0,1]$.

\begin{remark}\label{rem:linkphys}
    As mentioned in the introduction, the family of metrics $g_c$, $c\in(0,1]$ has already appeared in the physical literature, see e.g.~\cite{CNP, MM}. 
    We can relate it to the family considered in \cite{MM} as follows. 
    To avoid conflicting notation, let $\tilde{g}_\gamma$, $\gamma \geq1$, denote the metric $ds_6^2$ and $C(t)$ denote the function $c(t)$ appearing in \cite[(2.2)]{MM}. 
    The variables $r$ and $t$ and the functions $v(r)$ and $C(t)$ are related by 
    \[
    t=2r,\qquad C(t) = \frac{2}{c}v_c\left(\frac{t}{2}\right). 
    \]
    Using the change of invariant coframe
    \[
    \eta=\frac12(\epsilon_3+A_3),\qquad
    x_1=\frac12(\epsilon_1-e_1),\qquad
    y_2=\frac12(\epsilon_1+e_1),\qquad
    x_2=\frac12(\epsilon_2-e_2),\qquad
    y_1=-\frac12(\epsilon_2+e_2),
    \]
    and the identities
    \[
    \tanh(r) = \coth(2r)-\frac{1}{\sinh(2r)},\qquad
    \coth(r) = \coth(2r)+\frac{1}{\sinh(2r)},
    \]
    one obtains
    \[
    \tilde{g}_\gamma =\frac{4}{c}\,g_c.
    \]
    Moreover, since near the singular orbit
    \[
    C(t)=\gamma^2t+O(t^3),\qquad v_c(r)=r+O(r^3),
    \]
    the parameters are related by
    \[
    \gamma^2=\frac{1}{c}.
    \]
    In particular, the critical value $c=1$ corresponds to
    $\gamma=1$, i.e., to the Chamseddine--Volkov/Maldacena--Nu\~nez metric, and   
    \[
    \tilde{g}_{1}=4g_{1}.
    \]
    We recall, however, that the singular ODE governing these metrics and the properties of the corresponding solution were discussed using only numerical analysis tools.
\end{remark}

\section{Scalar curvature}\label{sect:scalar}
We now compute the scalar curvature of the metric $g$ \eqref{g} on the regular part of $Q_3$. We can write  
\[
g = u(r)dr^2 + g_r, 
\]
where 
\[
g_r = u(r)\eta^2 +v(r)\left[{\rm{T}}(r)(x_1^2+x_2^2) +\frac1{{\rm{T}}(r)}(y_1^2+y_2^2)\right] +w(r)(x_1\odot y_2-x_2\odot y_1). 
\]

Following \cite{EW}, we consider the unit normal vector field $N=\frac{1}{\sqrt{u}}\xi$ to the principal orbits $P_r \cong \SO(4)/\SO(2)$ 
along the curve $\gamma_r$. The shape operator $L_r$ of the orbit $P_r$ is given by 
\[
L_r = \frac{1}{2\sqrt{u(r)}} g_r^{-1}{g'_r}.
\] 
Using the Gauss and Codazzi equations and the Riccati equation for $L_r$, one obtains the expression of the Ricci tensor 
of the metric $g$ in terms of the shape operator $L_r$ and the Ricci tensor of $g_r$. 
Tracing it, one ends up with the expression of the scalar curvature of $g$:
\begin{equation}\label{eq:scal}
\mathrm{Scal}_g = \mathrm{Scal}_{g_r} - \frac{2}{\sqrt{u(r)}} \mathrm{tr}(L_r') - \mathrm{tr}(L_r^2) - (\mathrm{tr}(L_r))^2. 
\end{equation}

The scalar curvature $\mathrm{Scal}_{g_r}$ of the principal orbits can be easily computed using \cite[(7.39)]{Bes} 
and working with respect to the basis
$\{E_1,\ldots,E_5\} = \{A,X_1,X_2,Y_1,Y_2\}$ of $\mathfrak{m}$:
\[
\mathrm{Scal}_{g_r} = 	-\frac14 \sum_{1\leq i,j,l,s\leq 5}g_r^{il}g_r^{js}g_r([E_i,E_j]_{\mathfrak{m}},[E_l,E_s]_{\mathfrak{m}} ) 
					-\frac12 \sum_{1\leq i,j\leq 5} g_r^{ij}B(E_i,E_j), 
\]
where $B(X,Y) = 2\mathrm{tr}(XY)$ is the Cartan-Killing form of $\mathfrak{so}(4)$. 
The latter is represented by the matrix $-4\mathrm{Id}$ with respect to the considered basis. 

We let $\Delta \coloneqq v^2-w^2$ and, as before, $\T = {\rm{T}}(r) \coloneqq \tanh(r)$. Using the bracket relations \eqref{bracket}, 
we obtain 
\[
-\frac14 \sum_{1\leq i,j,l,s\leq 5}g_r^{il}g_r^{js}g_r([E_i,E_j]_{\mathfrak{m}},[E_l,E_s]_{\mathfrak{m}} ) = 
-\frac{u}{\Delta^2}(v^2+w^2) -\frac{v^2}{u\Delta}\left(\T^2+\frac{1}{\T^2}\right) + 2 \frac{w^2}{u\Delta},
\]
and
\[
-\frac12 \sum_{1\leq i,j\leq 5} g_r^{ij}B(E_i,E_j) = 4\frac{v}{\Delta}\left(\T+\frac{1}{\T}\right) + \frac{2}{u}, 
\]
so that
\[ 
\mathrm{Scal}_{g_r} = 4\frac{v}{\Delta}\left(\T+\frac{1}{\T}\right) -\frac{v^2}{u\Delta}\left(\T-\frac{1}{\T}\right)^2  
					-\frac{u}{\Delta^2}(v^2+w^2). 
\]

We now compute the remaining contributions in the RHS of \eqref{eq:scal}:
\begin{itemize}
\item  since $\det(g_r) = u\Delta^2$, we obtain
\[
\tr(L_r) 	= \frac{1}{2\sqrt{u}} \tr(g_r^{-1}g_r') = \frac{1}{2\sqrt{u}} \frac{(\det(g_r))'}{\det(g_r)} 
		= \frac{1}{2\sqrt{u}} \frac{(u\Delta^2)'}{u\Delta^2} =  \frac{1}{2\sqrt{u}} \left(\frac{u'}{u}+2\frac{\Delta'}{\Delta}\right). 
\]
We let
\[
K = K(r) \coloneqq \frac{u'}{u}+2\frac{\Delta'}{\Delta},
\]
so that 
\[
(\tr(L_r))^2 = \frac{1}{4u}K^2;
\]

\item we have
\[
-\frac{2}{\sqrt{u}}\tr(L_r') = -\frac{2}{\sqrt{u}}(\tr(L_r))' = -\frac{2}{\sqrt{u}}\left(\frac{1}{2\sqrt{u}} K\right)' 
					= \frac12 \frac{u'}{u^2}K -\frac{1}{u}K' ;
\]

\item the matrix representing $g_r$ with respect to the basis $\{\hat{A}, \hat{X_1}, \hat{Y}_2, \hat{X}_2, \hat{Y}_1\}$ has the 
block-diagonal form 
\[
g_r = 
\begin{pmatrix}
u & 0 & 0 \\
0 & B & O_2 \\
0 & O_2 & C
\end{pmatrix},\qquad \mbox{where}\qquad
B = \begin{pmatrix} v\T & w \\ w & \frac{v}{\T} \end{pmatrix},\qquad
C = \begin{pmatrix} v\T & -w \\ -w & \frac{v}{\T} \end{pmatrix},
\]
so $L_r^2$ is represented by the matrix
\[
L^2 = \frac{1}{4u}(g^{-1}g')^2 =
\frac{1}{4u}\begin{pmatrix}
\left(\frac{u'}{u}\right)^2 &0 &0 \\
0 & (B^{-1}B')^2 & O_2\\
0 & O_2 & (C^{-1}C')^2
\end{pmatrix}. 
\]
Therefore
\[
\begin{split}
\tr(L_r^2) 	&= \frac{1}{4u} \left[ \left(\frac{u'}{u}\right)^2 + \tr((B^{-1}B')^2) + \tr((C^{-1}C')^2) \right]\\
		&= \frac{1}{4u} \left[ \left(\frac{u'}{u}\right)^2 + (\tr(B^{-1}B'))^2 -2\det(B^{-1}B') + (\tr(C^{-1}C'))^2 - 2\det(C^{-1}C') \right]\\
		&= \frac{1}{4u} \left[ \left(\frac{u'}{u}\right)^2 +2\left(\frac{\Delta'}{\Delta}\right)^2 
			-\frac{4}{\Delta}\left((v')^2-(w')^2-v^2\left(\frac{\T'}{\T}\right)^2\right) \right],
\end{split}
\]
where we used
\[
\tr(B^{-1}B') = \frac{\Delta'}{\Delta} = \tr(C^{-1}C'), 
\]
and
\[
\det(B^{-1}B') = \frac{1}{\Delta}\left((v')^2-(w')^2-v^2\left(\frac{\T'}{\T}\right)^2\right) = \det(C^{-1}C'). 
\]
\end{itemize}

Substituting all expressions in \eqref{eq:scal}, we obtain 
\[
\begin{split}
\mathrm{Scal}_g 	&= \mathrm{Scal}_{g_r} - \frac{2}{\sqrt{u}} \mathrm{tr}(L_r') - \mathrm{tr}(L_r^2) - (\mathrm{tr}(L_r))^2 \\
				&= 4\frac{v}{\Delta}\left(\T+\frac{1}{\T}\right) -\frac{v^2}{u\Delta}\left(\T-\frac{1}{\T}\right)^2  
					-\frac{u}{\Delta^2}(v^2+w^2) 
					+ \frac12 \frac{u'}{u^2}K -\frac{1}{u}K' \\
				&\quad -\frac{1}{4u} \left[ \left(\frac{u'}{u}\right)^2 +2\left(\frac{\Delta'}{\Delta}\right)^2 
					-\frac{4}{\Delta}\left((v')^2-(w')^2-v^2\left(\frac{\T'}{\T}\right)^2\right) \right] 
					- \frac{1}{4u}K^2. 
\end{split}
\]
Now, since $\T=\tanh(r)$, we have
\[
\frac{\T'}{\T} = \frac{2}{\sinh(2r)} = \frac{1}{\T}-\T,\qquad \T+\frac{1}{\T} = 2\coth(2r). 
\]
Moreover, 
\[
 \frac12 \frac{u'}{u^2}K -\frac{1}{4u} \left[ \left(\frac{u'}{u}\right)^2 +2\left(\frac{\Delta'}{\Delta}\right)^2\right]- \frac{1}{4u}K^2 = 
 -\frac{3}{2u}\left(\frac{\Delta'}{\Delta}\right)^2.
\]
Therefore, we can rewrite the scalar curvature as follows
\[
\mathrm{Scal_g} = 8\frac{v}{\Delta}\coth(2r) -\frac{8v^2}{u\Delta\sinh^2(2r)}  
					-\frac{u}{\Delta^2}(v^2+w^2) 
					 -\frac{1}{u}K' 
					+\frac{(v')^2-(w')^2}{u\Delta} 		
					 -\frac{3}{2u}\left(\frac{\Delta'}{\Delta}\right)^2.
\]

\medskip

Let us now assume that the metric $g$ is pluriclosed. 
Lemma \ref{cond} and the discussion following it ensure that  $u=v'$ and $w = c\left(r\coth(2r)-\frac12\right)$, with $c\in\R$. 
The scalar curvature becomes then
\[
\mathrm{Scal_g} = 8\frac{v}{\Delta}\coth(2r) -\frac{8v^2}{v'\Delta\sinh^2(2r)}  
					-\frac{v'}{\Delta^2}(v^2+w^2) 
					 -\frac{1}{v'}\left(\frac{v''}{v'}+2\frac{\Delta'}{\Delta}\right)' 
					+\frac{(v')^2-(w')^2}{v'\Delta} 		
					 -\frac{3}{2v'}\left(\frac{\Delta'}{\Delta}\right)^2.
\]

Finally, we consider the condition $\rho^B=0$, which is equivalent to \eqref{equation}. This can be rewritten as
\[
\frac{v''}{v'} = 4\coth(2r) - 2\frac{vv'+ ww'}{\Delta}. 
\]
Using this identity, we have
\[
\begin{split}
K' &= \left(\frac{v''}{v'}+2\frac{\Delta'}{\Delta}\right)' = \left(4\coth(2r)+\frac{2vv'-6ww'}{\Delta}\right)' \\
    &= -\frac{8}{\sinh^2(2r)} + \frac{2(v')^2 +2vv''-6(w')^2-6ww''}{\Delta} - \frac{\Delta'}{\Delta^2}(2vv'-6ww')\\
    &= -\frac{8}{\sinh^2(2r)} + \frac{2(v')^2 +2vv''-6(w')^2-6ww''}{\Delta}- \frac{\Delta'}{\Delta^2}(\Delta'-4ww')\\
    &= -\frac{8}{\sinh^2(2r)} + \frac{2(v')^2 +2vv''-6(w')^2-6ww''}{\Delta}- \left(\frac{\Delta'}{\Delta}\right)^2  + \frac{4\Delta'ww'}{\Delta^2}. 
\end{split}
\]
Now, using again the above expression of $\frac{v''}{v'}$ and the identity $w''=\frac{8}{\sinh^2(2r)}w$, a tedious but straightforward computation 
leads to the expression of $\mathrm{Scal}_g$ given in the next result. 
\begin{proposition}\label{prop:scal}
    The scalar curvature of the $\mathrm{SO}(4)$-invariant Bismut Hermitian-Einstein metric $g_c$ on $Q_3\smallsetminus S^3$ along the curve $\gamma_r$ is given by
    \[
    \mathrm{Scal}_{g_c} = \frac{(5v_c^2+w_c^2)(w_c')^2}{v_c'\Delta_c^2} 
                    +\frac{40w_c^2}{v_c'\Delta_c \sinh^2(2r)},
    \]
    where $w_c=c\left(r\coth(2r)-\tfrac12\right)$ and $\Delta_c = v_c^2-w_c^2$. 
\end{proposition}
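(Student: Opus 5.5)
The plan is to start from the general formula for $\mathrm{Scal}_g$ obtained just before the statement, with $u=v'$ and $w=w_c$, and to eliminate all second derivatives. We will use three identities. The first is the Bismut--Einstein equation \eqref{equation}, written as $v''=4v'\coth(2r)-2v'(vv'+ww')/\Delta$. The second is the identity $w''=\frac{8}{\sinh^2(2r)}w$ for $w=w_c$, which follows directly from the integrated pluriclosed equation in Lemma \ref{cond}. The third is the expression for $K'$ already derived above. After these substitutions, $\mathrm{Scal}_g$ becomes a rational expression in $v,v',w,w'$ whose only remaining transcendental coefficients are $\coth(2r)$ and $1/\sinh^2(2r)$. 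Throughout, we multiply by the common denominator $v'\Delta^2$ and use $\Delta'=2vv'-2ww'$.

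Concretely, we first collect the contributions from $-\frac{1}{v'}K'$ and $-\frac{3}{2v'}(\Delta'/\Delta)^2$. The $(\Delta'/\Delta)^2$ terms combine to $-\frac{1}{2v'}(\Delta'/\Delta)^2$. The term $\frac{2vv''}{v'\Delta}$, after substituting $v''$, produces $\frac{8v\coth(2r)}{\Delta}$ together with $-\frac{4v(vv'+ww')}{\Delta^2}$. We then group all terms by type:
\begin{itemize}
\item terms containing $\coth(2r)$, which we expect to cancel: $8\frac{v}{\Delta}\coth(2r)$ against the contribution coming from $-\frac1{v'}\cdot\frac{2vv''}{\Delta}$;
\item terms containing $1/\sinh^2(2r)$, coming from $-\frac{8v^2}{v'\Delta\sinh^2}$, from $+\frac{8}{v'\sinh^2}$ in $-K'/v'$, and from $+\frac{6ww''}{v'\Delta}=\frac{48w^2}{v'\Delta\sinh^2}$;
\item purely algebraic terms in $v,v',w,w'$.
\end{itemize}
For the $\sinh^{-2}$ group, writing $8/v' = 8\Delta/(v'\Delta)=8(v^2-w^2)/(v'\Delta)$ gives the total $\frac{-8v^2+8v^2-8w^2+48w^2}{v'\Delta\sinh^2}=\frac{40w^2}{v'\Delta\sinh^2(2r)}$, which is exactly the second term of the claimed formula.

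It remains to show that the algebraic part reduces to $\frac{(5v^2+w^2)(w')^2}{v'\Delta^2}$. This means verifying that all terms containing $v'$ to a positive power cancel, namely the terms in $(v')^2$, $vv'ww'$ and $v^2v'\cdot$(const) arising from $-\frac{v'}{\Delta^2}(v^2+w^2)$, from $\frac{(v')^2-(w')^2}{v'\Delta}$, from $-\frac{2(v')^2-6(w')^2}{v'\Delta}$, from $-\frac{4\Delta'ww'}{v'\Delta^2}$ and from $-\frac{1}{2v'}(\Delta'/\Delta)^2$. I expect this bookkeeping to be the main obstacle. In particular, the term $-\frac{v'}{\Delta^2}(v^2+w^2)$ has no visible partner. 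I will therefore first recheck the orbit-scalar-curvature formula and the $\mathrm{tr}(L_r^2)$ formula in the chosen normalization (for instance $g(\hat A,\hat A)=u$), and test the final identity at $c=0$, where it must give $\mathrm{Scal}=0$ for Stenzel's Ricci-flat metric, and at $c=1$, $v=r$, where it can be compared by direct evaluation. If a mismatch appears, I will redo the reduction using the first integral $F=v'\Delta/\sinh^2(2r)$ from \eqref{F} to trade $v'$ for $F\sinh^2(2r)/\Delta$. This makes the cancellation of the $v'$-positive terms a polynomial identity in $v,w,w'$ that can be checked coefficient by coefficient.
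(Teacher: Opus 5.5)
Your route is the paper's own: set $u=v'$ in the general formula for $\mathrm{Scal}_g$, eliminate $v''$ via \eqref{equation} and $w''$ via $w''=\frac{8w}{\sinh^2(2r)}$, and use the expression for $K'$. Your treatment of the $\coth(2r)$ and $\sinh^{-2}(2r)$ groups is correct.

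However, you leave the decisive step undone. The obstruction you anticipate is a bookkeeping slip, not a defect of the formula. Your list of purely algebraic terms omits the by-product of the $v''$ substitution. Since $-\frac{1}{v'}K'$ contains $-\frac{2vv''}{v'\Delta}$, that substitution contributes two terms to $\mathrm{Scal}_g$: $-\frac{8v\coth(2r)}{\Delta}$ and $+\frac{4v(vv'+ww')}{\Delta^2}$. The second one is the missing partner of $-\frac{v'}{\Delta^2}(v^2+w^2)$. The full algebraic part is
\[
-\frac{v'(v^2+w^2)}{\Delta^2}+\frac{5(w')^2-(v')^2}{v'\Delta}+\frac{4v(vv'+ww')}{\Delta^2}-\frac{8(vv'-ww')ww'}{v'\Delta^2}-\frac{2(vv'-ww')^2}{v'\Delta^2}.
\]
Over the common denominator $v'\Delta^2$, the coefficients are:
\begin{itemize}
\item for $(v')^2$: $-(v^2+w^2)-\Delta+4v^2-2v^2=0$;
\item for $vv'ww'$: $4-8+4=0$;
\item for $(w')^2$: $5\Delta+8w^2-2w^2=5v^2+w^2$.
\end{itemize}
This is exactly $\frac{(5v^2+w^2)(w')^2}{v'\Delta^2}$. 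So there is no mismatch, and no need to recheck $\mathrm{Scal}_{g_r}$ or $\mathrm{tr}(L_r^2)$.

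Your fallback would not have worked as described anyway. The quantity $F=v'\Delta/\sinh^2(2r)$ satisfies $F'/F=-4ww'/\Delta$ (see the proof of Theorem \ref{thm:completeness}), so it is not constant for $c\neq 0$. Substituting $v'=F\sinh^2(2r)/\Delta$ therefore only renames the unknown. No first integral is needed: the cancellation above is already an identity in $v,v',w,w'$ treated as independent quantities.
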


\begin{remark}
Recall that $g_0$ is Stenzel's K\"ahler Ricci-flat metric. In such a case $w_0=0$ and thus $\mathrm{Scal}_{g_0}=0$. 
\end{remark}

We now compute the scalar curvature of the family $g_c$ on the singular orbit and, when $|c|\leq1$, its behavior at infinity. 

By \eqref{taylor} the Taylor expansion of $v_c$ at $r=0$ is given by
\[
v_c(r) =  r + \frac{4}{15} (1-c^2) r^3 + o(r^4).
\]
Moreover, the Taylor expansion of $w_c(r)$ at $r=0$ is
\[
w_c(r) = \frac23c r^2  - \frac{8}{45}cr^4 + o(r^4).
\]
Therefore, we have
\[
\lim_{r\to0^+}\mathrm{Scal}_{g_c} = \lim_{r\to0^+}\frac{(5v_c^2+w_c^2)(w_c')^2}{v_c'\Delta_c^2} 
                                    +\lim_{r\to0^+}\frac{40w_c^2}{v_c'\Delta_c \sinh^2(2r)} = \frac{80}{9}c^2+\frac{40}{9}c^2 = \frac{40}{3}c^2, 
\]
and we obtain the following. 
\begin{proposition}
    The metric $g_c$ has positive scalar curvature for all $c\in(0,1]$. 
\end{proposition}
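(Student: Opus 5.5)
The plan is to read positivity directly off the formula in Proposition \ref{prop:scal}, which expresses $\mathrm{Scal}_{g_c}$ on the regular part as a sum of two terms. We then handle the singular orbit by continuity, using the limit computed just above.

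\textbf{Regular part.} For $c\in(0,1]$, Theorem \ref{thm:completeness} says $v_c$ exists on all of $\mathbb R$. For $c=1$ we have $v_1(r)=r$. For $c\in(0,1)$ the proof of item iii) of that theorem gives
\[
v_c(r)>r>|w_c(r)| \quad\text{and}\quad v_c'(r)>1 \qquad \text{for } r>0.
\]
When $c=1$ we still have $v_1'=1>0$ and $v_1(r)=r>w_1(r)$, since $w_1(r)-r<0$ for $r>0$. Hence, for every $c\in(0,1]$ and $r>0$,
\[
v_c'>0, \qquad \Delta_c=v_c^2-w_c^2>0 .
\]

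Both summands in Proposition \ref{prop:scal} are therefore nonnegative, because their numerators are sums of squares times positive factors. The second summand is
\[
\frac{40w_c^2}{v_c'\Delta_c\sinh^2(2r)} .
\]
It is strictly positive as soon as $w_c(r)\neq0$. This holds since $w_c=c\,w_1$ with $c\neq0$ and $w_1(r)>0$ for $r>0$; the latter was shown in the proof of Theorem \ref{thm:completeness}, or follows from $w_1(0)=0$ and $w_1'>0$. So $\mathrm{Scal}_{g_c}(\gamma_r)>0$ for all $r>0$.

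\textbf{$G$-invariance.} Since $\mathrm{Scal}_{g_c}$ is $\mathrm{SO}(4)$-invariant and every principal orbit meets $\gamma_r$ for some $r>0$, positivity holds on all of $Q_3\smallsetminus S^3$.

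\textbf{Singular orbit.} The scalar curvature is a smooth function on $Q_3$, because $g_c$ extends smoothly across $S^3$ by Lemma \ref{lem:extend} and Proposition \ref{local}. Its value on $S^3$ is therefore the limit as $r\to0^+$ along $\gamma_r$. This limit was computed above from the Taylor expansions of $v_c$ and $w_c$ and equals $\tfrac{40}{3}c^2>0$. By homogeneity of $S^3$ under $\mathrm{SO}(4)$, the scalar curvature is this constant on the whole singular orbit.

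\textbf{Main obstacle.} The only delicate point is justifying the sign conditions $v_c'>0$ and $\Delta_c>0$ uniformly on $(0,\infty)$, including at the endpoint $c=1$. These are exactly the estimates from Theorem \ref{thm:completeness}, together with the explicit solution $v_1(r)=r$, so I expect no further difficulty. The Taylor computation at the origin is routine and has already been carried out.
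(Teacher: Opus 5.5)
Your proposal is correct and follows the paper's proof: positivity on the regular part comes from the formula of Proposition \ref{prop:scal} together with $v_c'>0$ and $\Delta_c>0$ from Theorem \ref{thm:completeness} (with $v_1=r$ at $c=1$), and positivity on $S^3$ comes from the limit value $\tfrac{40}{3}c^2$. You also supply the sign checks the paper leaves implicit, namely $w_c\neq 0$ for $r>0$ and $|w_c|\le w_1<r$ in the endpoint case $c=1$.
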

\begin{proof}
    Since $c\neq 0$, the scalar curvature on the singular orbit $S^3$ is positive and equal to $\tfrac{40}{3}c^2$. 
    The expression of $\mathrm{Scal}_{g_c}$ in Proposition \ref{prop:scal} and the results of Theorem \ref{thm:completeness} show that 
    the scalar curvature of $g_c$ is positive also on the regular part of $Q_3$ when $c\in(0,1]$. 
\end{proof}

Let us now discuss the behaviour for $r\to+\infty$. By Theorem \ref{thm:completeness}, iv), we have for $|c|<1$
\[
\frac{(5v_c^2+w_c^2)(w_c')^2}{v_c'\Delta_c^2} \sim  e^{-4r}, \qquad 
\frac{40w_c^2}{v_c'\Delta_c \sinh^2(2r)} \sim r^2 e^{-8r},
\]
so that 
\[
\lim_{r\to +\infty}\mathrm{Scal}_{g_c} = 0. 
\]

On the other hand, when $c=1$ we have the Chamseddine--Volkov/Maldacena-Nu\~nez metric $g_1$ with $v_1(r)=r$ and thus
\[
\lim_{r\to +\infty}\mathrm{Scal}_{g_1} = \lim_{r\to +\infty}\frac{(5r^2+w_1^2)(w_1')^2+40w_1^2(r^2-w_1^2)\operatorname{csch}^2(2r)}{(r^2-w_1^2)^2} = 6.
\]

These results allow us to prove the following. 
 
\begin{proposition}\label{prop:inhomogeneous}
    The metrics $g_c$ with $c\in [0,1]$ are non-homogeneous and pairwise non-isometric. Moreover, each $g_c$ is not Bismut flat.  
\end{proposition}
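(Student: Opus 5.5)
Let me plan the three claims in the order non-homogeneity, non-isometry, and non-flatness, all using the scalar curvature computations just established.

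\emph{Non-homogeneity.} A homogeneous Riemannian metric has constant scalar curvature. For $c\in(0,1)$ the scalar curvature equals $\tfrac{40}{3}c^2>0$ on the singular orbit $S^3$, while along $\gamma_r$ it tends to $0$ as $r\to+\infty$. For $c=1$ the value on $S^3$ is $\tfrac{40}{3}$, while the limit at infinity is $6$. In both cases $\mathrm{Scal}_{g_c}$ is non-constant, so $g_c$ is not homogeneous.

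The case $c=0$ needs a separate argument, because Stenzel's metric is Ricci-flat and its scalar curvature vanishes identically. My plan is to use the classical fact that a homogeneous Ricci-flat Riemannian manifold is flat (Alekseevskii--Kimelfeld). So it suffices to show that $g_0$ is not flat. Here $Q_3\cong TS^3$ is simply connected, and $g_0$ is complete. If $g_0$ were flat it would be isometric to $\R^6$. Its volume growth would then be Euclidean and it would contain no compact minimal submanifold. But the singular orbit $S^3$ is a compact minimal submanifold: it is the fixed-point-type orbit of isometries, and a singular orbit of a cohomogeneity-one action is minimal by symmetry, since its mean curvature vector is an $L$-invariant vector in the slice $V$ and $V^L=0$. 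This contradicts flatness.

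\emph{Pairwise non-isometry.} I will use the isometry invariant $\sup\mathrm{Scal}$ together with behaviour at infinity. First, $g_0$ is scalar-flat while the others are not, and $g_1$ is the only metric whose scalar curvature does not tend to $0$ at infinity. For $c,c'\in(0,1)$ I want an invariant that separates them, and the natural candidate is the value $\tfrac{40}{3}c^2$ on $S^3$. An isometry need not be $\SO(4)$-equivariant, so I must argue that it preserves the singular orbit. My first attempt will be the following. Since the metrics are complete with $\mathrm{Scal}\to0$ at infinity, the isometry group is the isometry group of a cohomogeneity-one metric. Its identity component contains $\SO(4)$, and since $g_c$ is not homogeneous, it acts with cohomogeneity one with the same orbit structure, so $S^3$ is the unique singular orbit. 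An isometry normalizes the identity component and therefore maps $S^3$ to itself. This gives $\tfrac{40}{3}c^2=\tfrac{40}{3}c'^2$, hence $c=c'$. Alternatively, I could compare the intrinsic round metric induced on $S^3$ together with the scalar curvature there. The step I expect to be the main obstacle is justifying that an arbitrary isometry preserves $S^3$. As a fallback, I would argue that $S^3$ is the set where the orbits of the full isometry group have minimal dimension.

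\emph{Non-flatness of the Bismut connection.} This follows directly from \cite[Theorem 5]{Z}: a complete simply connected Bismut flat manifold is a Lie group with a bi-invariant metric, and in particular it is homogeneous. Since $Q_3$ is simply connected, $g_c$ is complete, and $g_c$ is non-homogeneous, $g_c$ cannot be Bismut flat.
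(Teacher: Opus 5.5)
Your proof is correct and follows the paper's route: non-constant scalar curvature for $c\in(0,1]$ and Alekseevskii--Kimelfeld for $c=0$; then preservation of $S^3$ under any isometry, via the orbit structure of the identity component of the isometry group, followed by comparing $\mathrm{Scal}=\tfrac{40}{3}c^2$ there; and finally the Bismut-flat classification, which forces homogeneity. Your observation that $S^3$ is a compact minimal (indeed totally geodesic) orbit, which cannot lie in $\mathbb{R}^6$, supplies the non-flatness of Stenzel's metric that the paper takes for granted, while your extra invariants separating $g_0$ and $g_1$ are harmless but redundant, since $\tfrac{40}{3}c^2$ already separates all $c\in[0,1]$ once $S^3$ is known to be preserved.
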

\begin{proof} 
Recall from Remark \ref{rem:holiso} that the metrics $g_c$ and $g_{-c}$ are holomorphically isometric for every $c\neq0$, so we can 
focus on $c\in[0,1]$.
The metrics $g_c$, $0<c\leq 1$ have non-constant scalar curvature, hence they are not homogeneous. 
The Stenzel's metric, when $c=0$, is not homogeneous, too, as otherwise it would be flat by Alekseevsky-Kimelfeld theorem \cite{AK}.\par 
The group $G=\SO(4)$ is an isometry group for all metrics $g_c$. If $\phi:(Q_3,g_c)\to (Q_3,g_{c'})$ is an isometry, then $\phi_*G$ is an isometry group of $(Q_3,g_{c'})$ that has the same principal orbits as $G$, because $g_{c'}$ is not homogeneous. Therefore  the singular orbit $S^3$ is $\phi$-stable. The expression of the scalar curvature on the orbit $S^3$ shows that $c=c'$.

As for the last claim, we use \cite[Theorem 5]{Z}, which ensures that completeness and Bismut flatness 
would imply the manifold to be a Samelson space, hence homogenous, a contradiction. \end{proof}

\section{The Bismut Holonomy} 
In this section we compute the holonomy of the Bismut connection $\nabla^B = \nabla^{LC} - \frac12 g^{-1}d^c\omega$ of the family 
of Bismut Hermitian-Einstein 
metrics $g_c$, for $0<c\leq1$, where $\nabla^{LC}$ denotes the Levi Civita connection of $g_c$. 
It is known that $\mathrm{Hol}(\nabla^B)=\mathrm{SU}(3)$ for the Stenzel metric $g_0$.

We fix a metric $g_c$ and we consider the singular orbit $S^3$, a point $p\in S^3$ and the Bismut holonomy algebra $\gh_c$ at $p$. 
We know that $\gh_c\subseteq \su(3)$ as $g_c$ is Bismut-Ricci flat and moreover $\gh_c\neq \{0\}$ because $g_c$ is not Bismut flat. 
By Ambrose-Singer theorem, we know that the isotropy algebra of the full isometry group normalizes $\gh_c$. 
In our case we consider the subalgebra $\so(3)\subset \su(3)$ given by the isotropy $\frak g_p$. We also note that $(\su(3),\so(3))$ is a symmetric pair, 
so that we can write 
$$\su(3) = \so(3) + \gq,\quad [\gq,\gq] = \so(3),\qquad \gq\cong S^2_o(\mathbb R^3).$$
As an $\so(3)$-module, the algebra $\su(3)$ has only two irreducible summands which are non equivalent, 
hence any non trivial invariant subspace is either $\so(3)$ or $\gq$. But $\gq$ is not a subalgebra, 
hence $\gh_c = \so(3)$ or $\gh_c = \su(3)$.

We claim that $\gh_c = \su(3)$ for all $c\in[0,1]$. To prove this, we divide the discussion into two parts: $c\in[0,1)$ and $c=1$.

\subsection{The Bismut holonomy for $c\in[0,1)$}
This is the simplest case and we prove the claim in the next result. 
\begin{proposition} 
If $c\in [0,1)$, the metrics $g_c$ have full Bismut holonomy, namely $\mathrm{Hol}(\nabla^B)=\SU(3)$.
\end{proposition}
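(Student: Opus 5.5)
The plan is to exclude the alternative $\gh_c=\so(3)$ by a pointwise curvature computation at the point $p$ of the singular orbit $S^3$. By the Ambrose--Singer theorem, $\gh_c$ contains the image of the Bismut curvature operator $R^B_p\colon \Lambda^2T_pQ_3\to\su(3)$. So it suffices to show that this image is not contained in $\so(3)$. In other words, the $\gq$-component of $R^B_p$ must be nonzero, where $\gq\cong S^2_o(\mathbb R^3)$ is the complement in the symmetric decomposition $\su(3)=\so(3)+\gq$ fixed above.

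First I use equivariance to reduce the problem. The isotropy $\mathfrak g_p=\so(3)$ acts on $T_pQ_3\cong \mathbb R^3\otimes\C=\gn\oplus V$, where $J$ swaps the two standard summands. We have
\[
\Lambda^2T_pQ_3\cong \Lambda^2\gn\oplus\Lambda^2V\oplus(\gn\otimes V),\qquad \gn\otimes V\cong \mathbb R\oplus\mathbb R^3\oplus S^2_o(\mathbb R^3).
\]
The map $R^B_p$ is $\so(3)$-equivariant, and $S^2_o(\mathbb R^3)$ occurs exactly once in $\Lambda^2T_pQ_3$, inside $\gn\otimes V$. By Schur's lemma, the $\gq$-component of $R^B_p$ is therefore a single scalar $\kappa_c$ times the identification $S^2_o\to\gq$. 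It is then enough to evaluate one suitable entry. A natural choice is
\[
g\bigl(R^B(\hat A,\xi)\,\hat Y_1,J\hat Y_1\bigr)-g\bigl(R^B(\hat A,\xi)\,\hat Y_2,J\hat Y_2\bigr)
\]
or a similar traceless combination of mixed terms $R^B(\hat Y_i,\xi_j)$, taken in the limit $r\to0^+$ along $\g_r$. Such a combination kills the $\so(3)$-part and the trace part.

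To compute this entry, I write $\n^B=\n^{LC}-\frac12 g^{-1}d^c\o$ along $\g_r$ in the invariant frame $\{\xi,\hat A,\hat X_i,\hat Y_i\}$, using \eqref{domega} with $u=v'$. I compute the connection coefficients through the Koszul formula for cohomogeneity-one metrics, and then the curvature. Next I insert the Taylor expansions
\[
v_c=r+\tfrac4{15}(1-c^2)r^3+o(r^4),\qquad w_c=\tfrac23cr^2+O(r^4),
\]
and pass to the limit $r\to0$ in the smooth frame $\{\partial_r,\partial_{z_i},\hat A,\hat Y_i\}$ of Proposition \ref{prop:extension}. I expect $\kappa_c$ to be an explicit polynomial in $c$ of the form $a(1-c^2)+bc^2$. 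The case split in the paper, and the fact that $v_1=r$ has vanishing cubic term, suggest that $\kappa_c$ is essentially a positive multiple of $(1-c^2)$. In that case $\kappa_c\neq0$ for $c\in[0,1)$, and it degenerates at $c=1$, which would explain why $c=1$ needs a separate argument.

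The main obstacle is this curvature computation at the singular orbit. The terms $x_i=\frac1r dz_i$ introduce $1/r$ singularities that must cancel, and the torsion contributes quadratic terms in $w_c'$ that could compete with the $(1-c^2)$ term. If the value at $p$ turns out to be inconclusive, I will instead evaluate the same $\gq$-type component at a regular point $\g_r$. There the isotropy is only $\SO(2)$, so I compare with the parallel-transported $\so(3)$. A more robust fallback is asymptotic: as $r\to+\infty$ with $|c|<1$, Theorem \ref{thm:completeness} iv) shows that $w_c/v_c\to0$ exponentially. Hence $g_c$ is asymptotic to the Stenzel metric, whose curvature (with holonomy $\SU(3)$) has a nonvanishing $\gq$-component. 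The resulting nonvanishing $\gq$-component of $R^B$ far out then contradicts $\gh_c=\so(3)$.
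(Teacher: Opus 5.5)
\textbf{There is a genuine gap: the proposal never shows that $\kappa_c\neq 0$, and the entry you propose to test vanishes identically.}

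Your Schur-lemma reduction is sound. Since $S^2_o(\mathbb R^3)$ occurs only once in $\Lambda^2T_pQ_3$, the obstruction to $\gh_c=\so(3)$ is a single real scalar $\kappa_c$. Any one entry of $R^B_p$ that sees the $\gq$-component detects it. But you only \emph{expect} $\kappa_c$ to be a multiple of $(1-c^2)$, and computing it is the whole content of the proposition.

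Your proposed test quantity $g(R^B(\hat A,\xi)\hat Y_1,J\hat Y_1)-g(R^B(\hat A,\xi)\hat Y_2,J\hat Y_2)$ is zero for every $c$. The principal isotropy $H\cong\SO(2)$ fixes $\hat A$ and $\xi$, rotates $(\hat Y_1,\hat Y_2)$, and commutes with $J$. Hence $R^B(\hat A,\xi)$ commutes with $H$, and the two terms are equal for every $r>0$. Equivalently, at $p$ the bivector $\hat A\wedge\xi$ corresponds, via $V\cong\gn$ through $J$, to an element whose traceless part is $\mathrm{diag}(\frac23,-\frac13,-\frac13)$ in the basis $\hat A,\hat Y_1,\hat Y_2$. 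Your difference reads off the two equal entries.

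Following your plan would therefore return $0$ and push you to the fallbacks, which are not proofs as stated. At a regular point there is no canonical $\so(3)$ to project onto; determining the parallel transport of the isotropy $\so(3)$ is exactly the hard part, as the paper's separate treatment of $c=1$ shows. The asymptotic comparison with Stenzel would need rescaled curvature limits, plus a proof that the cone's curvature image lies in no $\SU(3)$-conjugate of $\so(3)$.

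\textbf{The missing idea is to test the $\hat A$-diagonal entry} $g(R^B_{\xi,\hat A}\hat A,\xi)=g(R^B_{\xi,\hat A}\hat A,J\hat A)$. This must vanish at $p$ if $R^B_p$ takes values in $\so(3)$, because $R^B_p$ would then preserve $T_pS^3$. Since $d^c\omega(\hat A,\xi,\cdot)=0$, the torsion drops out of every covariant derivative involved. By Koszul and $H$-invariance,
$\nabla^B_{\hat A}\hat A=-\frac{u'}{2u}\xi$, $\nabla^B_\xi\xi=\frac{u'}{2u}\xi$, and $\nabla^B_{\hat A}\xi=\nabla^B_\xi\hat A=\frac{u'}{2u}\hat A$.
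This gives $g(R^B_{\xi,\hat A}\hat A,\xi)=\frac12\bigl(-u''+(u')^2/u\bigr)$, which is just the curvature of $u(dr^2+\eta^2)$. Because $\xi$ and $\hat A$ are smooth near $S^3$, none of the $1/r$ cancellations or quadratic $w_c'$ terms you anticipate arise.

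With $u=v_c'=1+\frac45(1-c^2)r^2+\dots$, the limit as $r\to0$ is $-\frac45(1-c^2)\neq0$ for $c\in[0,1)$. So $\gh_c=\su(3)$, and simple connectivity of $Q_3$ gives $\mathrm{Hol}(\nabla^B)=\SU(3)$. Combined with your Schur argument, this single entry shows $\kappa_c$ is exactly a nonzero constant times $(1-c^2)$. That also explains why $c=1$ needs a different argument.
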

\begin{proof} 
Suppose the holonomy algebra $\gh_c$ reduces to $\so(3)$. 
It then follows that all curvature endomorphisms $R^B_{z_1,z_2}|_p$ preserve the tangent space $T_pS^3$, 
where $R^B$ denotes the Bismut curvature. 
This implies, in particular, that 
$$0=g_c(R^B_{\xi,\hat A}\hat A,\xi)|_p = \lim_{r\to 0^+}g_c(R^B_{\xi,\hat A}\hat A,\xi)|_{\g_r}.$$
On the other hand, we have for $r>0$
\begin{align}\label{curv}
g_c(R^B_{\xi,\hat A}\hat A,\xi) &= g_c(\n^B_\xi\n^B_{\hat A}\hat A,\xi) -g_c(\n^B_{\hat A}\n^B_\xi\hat A,\xi)\notag\\ 
{}&= \frac d{dr}g_c(\n^B_{\hat A}\hat A,\xi) - g_c(\n^B_{\hat A}\hat A,\n^B_\xi\xi) 
{}- Ag_c(\n^B_\xi\hat A,\xi) + g_c(\n_\xi\hat A,\n^B_ {\hat A}\xi).
\end{align}
As $d^c\omega(\hat A,\xi,\cdot)=0$, we have 
$$\n^B_{\hat A}\hat A = \n^{LC}_{\hat A}\hat A, \qquad \n^B_{\hat A}\xi = \n^B_{\xi}\hat A = \n^{LC}_{\hat A}\xi,\qquad \n^B_{\xi}\xi = \n^{LC}_{\xi}\xi.   $$
A simple application of Koszul formula shows that 
$$\n^B_{\hat A}\hat A = -\frac{u'}{2u}\xi,\qquad \n^B_{\xi}\xi = \frac{u'}{2u}\xi,\qquad \n^B_{\hat A}\xi = \frac {u'}{2u}\hat A.$$
It then follows that \eqref{curv} can be written as 
$$g_c(R^B_{\xi,\hat A}\hat A,\xi) = \frac12\Bigl(-u'' + \frac{u'^2}{u}\Bigr). $$
As $u(r) = 1+ \frac 45(1-c^2)r^2 + o(r^3)$, we see that 
$$\lim_{r\to 0}\frac12\Bigl(-u'' + \frac{u'^2}{u}\Bigr) = -\frac 45 (1-c^2),$$
and therefore 
$$g_c(R^B_{\xi,\hat A}\hat A,\xi)|_p \neq 0,\ c\in [0,1),$$
a contradiction. This implies that the holonomy is not $\so(3)$, hence it is full $\su(3)$. 
Since $Q_3$ is simply connected, we conclude that $\mathrm{Hol}(\nabla^B)=\SU(3)$. 
\end{proof}

\subsection{The Bismut holonomy for $c=1$}
This case has to be treated differently, 
as it can be seen that at $p$ the curvature operator satisfies $R^B_p(\Lambda^2(T_pQ_3))\subseteq \so(3)$ 
and therefore the argument used in the previous proof does not apply. 

We assume that the Bismut holonomy is contained in the standard
$\SO(3)\subset \SU(3)$, and derive a contradiction.  Such a reduction is
equivalent to the existence of a $\nabla^B$-parallel real Lagrangian
rank-three subbundle
$$\mathcal E\subset TQ_3,\qquad
TQ_3=\mathcal E\oplus J\mathcal E$$
and we may assume that, at a point $p$ of the singular orbit,
\[
 \mathcal E_o=T_oS^3 .
\]

\begin{lemma} The distribution $\mathcal E$ is $\SO(4)$-invariant.\end{lemma}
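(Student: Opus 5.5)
Since $Q_3$ is simply connected, a $\nabla^B$-parallel subbundle is determined by its fibre at one point. Indeed, parallel transport along any path from $p$ carries $\mathcal E_p$ to $\mathcal E_x$. Because $\mathrm{Hol}_p(\nabla^B)\subseteq\SO(3)$ preserves $\mathcal E_p$, the result does not depend on the path. The plan is therefore to show that for every $k\in\SO(4)$ the transported subbundle $k_*\mathcal E$ is again $\nabla^B$-parallel and coincides with $\mathcal E$ at one point; uniqueness then gives $k_*\mathcal E=\mathcal E$. Here I write $p=o$ for the base point on the singular orbit.

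First, each $k\in\SO(4)$ is a holomorphic isometry of $g_1$. It therefore preserves $\omega$, $d^c\omega$, and hence $\nabla^B=\nabla^{LC}-\frac12 g^{-1}d^c\omega$, so $k_*\mathcal E$ is a $\nabla^B$-parallel, real, rank-three, Lagrangian subbundle. It remains to compare fibres at a single point.

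Take first $k$ in the isotropy $L=\SO(3)$ of $p$. Then $k$ preserves the orbit $S^3$ and fixes $p$, so $(k_*\mathcal E)_p=dk_p(T_pS^3)=T_pS^3=\mathcal E_p$. Hence $k_*\mathcal E=\mathcal E$ for all $k\in L$.

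For general $k$, consider the point $kp\in S^3$. I would compare $\mathcal E_{kp}$ with $T_{kp}S^3=dk_p(T_pS^3)$. The natural strategy is to show that $\mathcal E$ restricted to $S^3$ equals $TS^3$, i.e. that $TS^3$ is $\nabla^B$-parallel along curves in $S^3$. Concretely, I would check that $S^3$ is totally geodesic for $\nabla^B$ in the sense that $\nabla^B_XY\in TS^3$ for $X,Y$ tangent to $S^3$. This follows if the second fundamental form of $S^3$ and the torsion term $g^{-1}d^c\omega(X,Y)$ both have no normal component. The normal space at $p$ is $J T_pS^3$, which as an $L$-module is the standard representation $\mathbb R^3$. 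Tangential-to-normal $L$-equivariant bilinear maps $\mathbb R^3\times\mathbb R^3\to\mathbb R^3$ are multiples of the cross product, so they are skew. The symmetric second fundamental form therefore vanishes. The skew torsion part is a multiple of $d^c\omega(\hat A,\hat Y_1,\hat Y_2)$ at $p$, which is $-d\omega(\xi,\cdot,\cdot)$ evaluated in the limit $r\to0$. From \eqref{domega} with $u=v'$ and $w(0)=0$, its value is governed by $(w/\T)'(0)$ and $w(0)$; these are the quantities I would compute to decide whether this piece vanishes. This computation is the main obstacle. If it does not vanish, I would instead use the full $\SO(4)$ symmetry differently. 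Namely, the set $\{x\in S^3:\mathcal E_x=T_xS^3\}$ is closed, and its openness would follow from a local argument using $\SO(4)$-translates. Alternatively, one can pass to the parallel-transport description along integral curves of $\hat Y_i$: compare $\mathcal E$ and $k_*\mathcal E$ for $k=\exp(tY)$ via the ODE for parallel transport, and use that both solve the same ODE with initial data related by the $L$-invariance established above.

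Once $\mathcal E_{kp}=T_{kp}S^3$ for all $k$, we get $(k_*\mathcal E)_{kp}=dk_p(\mathcal E_p)=T_{kp}S^3=\mathcal E_{kp}$. By uniqueness of parallel subbundles with given fibre, $k_*\mathcal E=\mathcal E$ for all $k\in\SO(4)$, which proves the Lemma.
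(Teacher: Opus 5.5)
Your reduction is sound. $k_*\mathcal E$ is again a $\nabla^B$-parallel Lagrangian subbundle, a parallel subbundle is determined by one fibre, the case $k\in L$ is correct, and the Levi-Civita second fundamental form of $S^3$ does vanish by $L$-equivariance. The gap is the decisive step: showing that $k_*\mathcal E$ and $\mathcal E$ agree at some point for general $k$. You leave the torsion term undecided, and the quantity you single out is the wrong one.

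$d^c\omega(\hat A,\hat Y_1,\hat Y_2)$ has all three arguments tangent to $S^3$. It therefore only measures the tangential part of the torsion and is irrelevant for autoparallelism. Moreover, it does not vanish: along $\gamma_r$ it equals $-\T^{-2}(\T w)'\to -2c$, and likewise $(w/\T)'(0)=\tfrac23 c$. Carrying out your plan as stated would therefore lead you to conclude, wrongly, that the route fails.

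Your fallbacks do not replace this step. Openness of $\{x\in S^3:\mathcal E_x=T_xS^3\}$ is essentially the statement itself. Comparing $\mathcal E$ with $k_*\mathcal E$ through the transport ODE requires them to agree at one point, which is exactly what is missing.

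What you actually need is the mixed component $d^c\omega(X,Y,N)$ with $X,Y\in T_pS^3$ and $N\in JT_pS^3$. By $L$-equivariance it lies in the one-dimensional space $(\Lambda^2\mathbb R^3\otimes\mathbb R^3)^{\SO(3)}$. So it is determined by the single value
$d^c\omega(\hat Y_1,\hat Y_2,\xi)|_p=\lim_{r\to0}\T^{-2}\,d\omega(\hat A,\hat X_1,\hat X_2)$,
which is $0$ by the paper's computation of $d\omega$. With this your route closes: $S^3$ is $\nabla^B$-autoparallel, so $\mathcal E|_{S^3}=TS^3$ and hence $(k_*\mathcal E)_{kp}=\mathcal E_{kp}$.

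The paper avoids any computation. The holonomy $\SO(3)$ acts on $T_pQ_3\cong\mathbb R^3\otimes\mathbb C$ irreducibly and of real type. Hence the holonomy-invariant Lagrangian subspaces at $p$ are exactly $(\cos\vartheta+\sin\vartheta J)\mathcal E_p$. Writing $(h_*\mathcal E)_p$ in this form gives a continuous homomorphism $\SO(4)\to U(1)$; this uses that $h$ is holomorphic and $J$ is $\nabla^B$-parallel. That homomorphism is trivial because $\SO(4)$ is semisimple.
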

\begin{proof} Note that the set of all Lagrangian $\SO(3)$-invariant subspaces of $T_pQ_3$ is in bijection with the real forms of $T^{(1,0)}_pQ_3$, hence with $S^2(T^{(1,0)}_pQ_3)^{\SO(3)}\cong \mathbb C$. Therefore if $h\in \SO(4)$, then $(h_*\mathcal E)_p$ can be identified with $\mathcal E_o$ modulo some complex number. This means that we have a homomorphism $\SO(4)\to U(1)$, which is then necessarily trivial as $\SO(4)$ is semisimple. It follows that $h_*\mathcal E =\mathcal E $.\end{proof}

We now determine the restriction of $\mathcal E$ along the normal
geodesic.
\begin{lemma} We have $\hat A_{\g_r}\in \mathcal E_{\g_r}$
\end{lemma}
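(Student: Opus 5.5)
Since $\mathcal E$ is $\SO(4)$-invariant by the previous lemma, at each point $\g_r$ with $r>0$ the fibre $\mathcal E_{\g_r}$ is invariant under the isotropy $H=\SO(2)$ generated by $A$. We decompose $T_{\g_r}Q_3$ under $H$: the trivial summand is $\mathbb R\xi+\mathbb R\hat A$, and $\gp_1+\gp_2$ is a sum of two copies of the standard two-dimensional representation, with no trivial component. Because $\mathcal E_{\g_r}$ is real three-dimensional and $H$-invariant, it is a sum of isotypic pieces. Its component in the nontrivial isotypic part $\gp_1+\gp_2$ is an $H$-submodule of even real dimension. Hence $\mathcal E_{\g_r}$ must contain a nonzero $H$-fixed vector, and $\mathcal E_{\g_r}\cap(\mathbb R\xi+\mathbb R\hat A)$ is one-dimensional, since it cannot be two-dimensional: $\xi$ and $\hat A=-J\xi$ span a complex line and $\mathcal E$ is Lagrangian. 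So along the curve there is a unit vector $\cos\phi(r)\,\xi/\sqrt u+\sin\phi(r)\,\hat A/\sqrt u$ spanning this line, for a continuous angle function $\phi$.

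Next we pin down $\phi$. At $r\to0$, the fixed line of $H$ in $T_pQ_3$ consists of $\mathbb R\hat A|_p$, tangent to $S^3$, together with the normal direction $\g'(0)$. Since $\mathcal E_p=T_pS^3$ is Lagrangian and $\g'(0)=J$-image of a tangent vector, continuity gives $\phi(0)=\pi/2$, i.e.\ the line tends to $\hat A$. To propagate this, we use parallelism along $\g$. The line $\ell=\mathcal E\cap(\mathbb R\xi+\mathbb R\hat A)$ is intrinsically determined as the $H$-fixed part of $\mathcal E$, and $\nabla^B_\xi$ commutes with the $H$-action because $H$ acts by Bismut-affine isometries fixing $\g$. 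Therefore $\ell$ is itself $\nabla^B_\xi$-parallel along $\g$. Using the formulas already computed, $\nabla^B_\xi\xi=\frac{u'}{2u}\xi$ and $\nabla^B_\xi\hat A=\frac{u'}{2u}\hat A$, the Bismut transport on the plane $\Span\{\xi,\hat A\}$ is just a rescaling by $\sqrt u$ with no rotation. Hence the unit vectors $\xi/\sqrt u$ and $\hat A/\sqrt u$ are both parallel, $\phi$ is constant, and $\phi\equiv\pi/2$. This gives $\hat A_{\g_r}\in\mathcal E_{\g_r}$.

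The main obstacle is the endpoint step. Parallelism is a first-order ODE along $\g$, and $r=0$ is where $\g$ leaves the regular part, so we have to justify continuity of $\ell$ at $r=0$. We plan to handle this by noting that $\mathcal E$ is a smooth bundle and $\ell_r$ is the image of $\mathcal E_{\g_r}$ under the smooth projection onto $H$-invariants. Since $H$ is compact, this projection is the averaging map $\int_H h_*\,dh$, which is continuous up to $r=0$. We also need that the $H$-fixed part of $\mathcal E_p=T_pS^3$ is exactly $\mathbb R\hat A|_p$. This holds because $H\subset L=\SO(3)$ acts on $T_pS^3\cong\mathbb R^3$ standardly, fixing only the axis of $A$. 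The vanishing of the torsion terms $d^c\omega(\hat A,\xi,\cdot)=0$, already noted in the previous proof, is what makes $\nabla^B=\nabla^{LC}$ on this plane and lets us reuse those formulas.
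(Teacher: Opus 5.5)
Your argument is correct, but it takes a longer route than the paper. The paper argues directly. Since $d^c\omega(\xi,\hat A,\cdot)=0$, one has $\nabla^B_\xi\hat A=\nabla^{LC}_\xi\hat A=\frac{u'}{2u}\hat A$, and this vanishes because $u=v_1'\equiv 1$ for $c=1$. So $\hat A$ is itself $\nabla^B$-parallel along $\gamma$. Since $\hat A|_p\in T_pS^3=\mathcal E_p$ and $\mathcal E$ is parallel, it follows at once that $\hat A_{\gamma_r}\in\mathcal E_{\gamma_r}$.

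You instead proceed in four steps: you use the $H$-isotypic decomposition to isolate the line $\ell=\mathcal E\cap\Span\{\xi,\hat A\}$; you show $\ell$ is parallel because $H$ commutes with transport along $\gamma$; you note that $\nabla^B_\xi$ acts on that plane as the scalar $\frac{u'}{2u}$; and you fix the angle by a continuity argument at $r=0$.

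The continuity apparatus (the averaging projection and the continuity of $\ell$ at $r=0$) is not needed. The curve $\gamma$ is smooth through $r=0$, and $\xi=\gamma'$ and $\hat A$ are smooth along it, so the transport ODE is regular there. Your own formula then shows directly that $\hat A/\sqrt{u}$ is the parallel transport of $\hat A|_p/\sqrt{u(0)}$, for every $c$. That gives the lemma in one line, without even using $u\equiv 1$.

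What your route does buy is the isotypic splitting $\mathcal E_{\gamma_r}=(\mathcal E_{\gamma_r}\cap\Span\{\xi,\hat A\})\oplus(\mathcal E_{\gamma_r}\cap(\gp_1+\gp_2))$, together with $\mathcal E_{\gamma_r}\cap\Span\{\xi,\hat A\}=\mathbb R\hat A$ exactly. The paper uses this tacitly in the next step, when it asserts that $\mathcal E\cap(\gp_1+\gp_2)$ is two-dimensional and writes $\mathcal E_{\gamma_r}=\Span\{\hat A,\hat U_1,\hat U_2\}$.
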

\begin{proof} It is enough to prove that 
$\nabla^B_\xi \hat A=0$. We recall that $d^c\omega(\xi,\hat A,\cdot)=0$ and therefore $\nabla^B_\xi \hat A = \nabla^{LC}_\xi \hat A$. As  $g(\hat A,\hat A)=u=1$ and all scalar products of $\hat A$ with $\hat X_i,\hat Y_i,\xi$ vanish, we get our claim.\end{proof}

The principal isotropy group $H\simeq \SO(2)$ fixes
$A,\xi$ and acts by the standard two-dimensional representation on $\gp_1,\gp_2$. Near $r=0$ with $r>0$, the  two-dimensional space 
$\mathcal E\cap(\gp_1+\gp_2)$ is transverse to
$\gp_1$, and hence is the graph of an $H$-equivariant map $
\mathcal \gp_2\longrightarrow\gp_1$.
Since $\End(\mathbb R^2)^{\SO(2)}=\Span\{I,K\}$ where 
$K=\left(\begin{smallmatrix}0&-1\\1&0\end{smallmatrix}\right)$,
there exist functions $a(r),b(r)$ such that
\[
\begin{split}
 U_1&=Y_1+aX_1+bX_2,\\
 U_2&=Y_2-bX_1+aX_2,
\end{split}
\]
and
\beq\label{distr}
\mathcal E_{\gamma_r}=\Span\{\hat A,\hat U_1,\hat U_2\}.
\eeq

Since $\mathcal E$ is Lagrangian, we must have
$\omega(\hat U_1,\hat U_2)=0$ and we obtain
\begin{equation}\label{eq:Lagrangian-ab}
 wT(a^2+b^2)-2vb+\frac{w}{T}=0.
\end{equation}
Set
$$
\Delta \coloneqq v^2-w^2,\qquad s \coloneqq \sqrt \Delta.
$$
Equation \eqref{eq:Lagrangian-ab} can be rewritten as
$$
a^2+
\left(b-\frac{v}{wT}\right)^2
=
\frac{s^2}{w^2T^2}
$$
and therefore we can write 
\[
 a=\frac{s}{wT}\sin\vartheta,
 \qquad
 b=\frac{v-s\cos\vartheta}{wT}.
\]
for some angle $\vartheta(r)$ for $r>0$ sufficiently close to $r=0$.

We next impose
$$\nabla^B_\xi\mathcal E\subset\mathcal E,$$
and we prove the following.
\begin{lemma} We have 
\begin{equation}\label{Radial}
\begin{split}
 \nabla^B_\xi \hat X_1
 &=(\sigma+\rho)\hat X_1+T\kappa\,\hat Y_2,\\
 \nabla^B_\xi \hat X_2
 &=(\sigma+\rho)\hat X_2-T\kappa\,\hat Y_1,\\
 \nabla^B_\xi \hat Y_1
 &=(\sigma-\rho)\hat Y_1-\frac{\kappa}{T}\hat X_2,\\
 \nabla^B_\xi \hat Y_2
 &=(\sigma-\rho)\hat Y_2+\frac{\kappa}{T}\hat X_1.
\end{split}
\end{equation}
where 
\[
 \rho=\frac{T'}{2T},
 \qquad
 \sigma=\frac{vv'-ww'}{2\Delta},
 \qquad
 \kappa=\frac{vw'-v'w}{2\Delta}.
\]
\end{lemma}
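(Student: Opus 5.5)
To obtain the formulas \eqref{Radial}, I will compute the Bismut connection in the radial direction as $\nabla^B_\xi Z=\nabla^{LC}_\xi Z-\frac12 g^{-1}d^c\omega(\xi,Z,\cdot)$ for $Z\in\{\hat X_i,\hat Y_i\}$, working along $\gamma_r$ with the fields $\hat X_i,\hat Y_i$ (which are Killing and commute with $\xi$ up to the invariant extension, so $\nabla^{LC}_\xi \hat Z=\nabla^{LC}_{\hat Z}\xi$). First, by Koszul's formula, $g(\nabla^{LC}_\xi\hat Z,\hat W)=\frac12\frac{d}{dr}g(\hat Z,\hat W)+\frac12\bigl(g([\xi,\hat Z],\hat W)-g([\xi,\hat W],\hat Z)\bigr)$. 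Since $[\xi,\hat Z]=0$ for Killing fields from the action (and $\xi$ is the normal geodesic field extended invariantly), the Levi-Civita part is just $\frac12 g_r^{-1}g_r'$ restricted to $\gp_1+\gp_2$, i.e. the shape-operator block matrices $\frac12B^{-1}B'$ and $\frac12C^{-1}C'$ already computed in Section~\ref{sect:scalar}. I will explicitly invert $B=\begin{pmatrix} vT& w\\ w& v/T\end{pmatrix}$ (determinant $\Delta$) and compute $\frac12B^{-1}B'$ in the basis $\{\hat X_1,\hat Y_2\}$; the diagonal entries come out as $\sigma\pm\rho$-type terms plus symmetric off-diagonal terms involving $vw'-v'w$ and $ww'$, $vv'$.

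Second, I compute the torsion correction. Using $d^c\omega=-d\omega(J\cdot,J\cdot,J\cdot)$ and the explicit expression \eqref{domega} with $u=v'$ (so the last line vanishes), the relevant components are $d^c\omega(\xi,\hat X_1,\hat Y_2)$, $d^c\omega(\xi,\hat X_1,\hat X_1)=0$ etc. Since $J\xi=-\hat A$, $J\hat X_i=-T\hat Y_i$, $J\hat Y_i=\frac1T\hat X_i$, one gets $d^c\omega(\xi,\hat X_1,\hat Y_2)=-d\omega(-\hat A,-T\hat Y_1,\frac1T\hat X_2)=d\omega(\hat A,\hat X_2,\hat Y_1)\cdot(\pm1)$, which by \eqref{eq1} equals $\pm(\frac1T-T)w$; similarly $d^c\omega(\xi,\hat X_1,\hat X_2)$ and $d^c\omega(\xi,\hat Y_1,\hat Y_2)$ vanish because $d\omega(\hat A,\hat Y_1,\hat Y_2)=d\omega(\hat A,\hat X_1,\hat X_2)=0$, and $d^c\omega(\xi,\hat X_i,\hat Y_i)=0$. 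So the torsion term is a skew endomorphism mixing $\hat X_1\leftrightarrow\hat Y_2$ and $\hat X_2\leftrightarrow\hat Y_1$, of size $\frac{w}{2}(\frac1T-T)$ raised by $g^{-1}$ via $B^{-1}$.

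Third, I add the two pieces. The claim is that the symmetric off-diagonal contributions of $\frac12B^{-1}B'$ combine with the skew torsion contribution so that the $\hat X_1$-coefficient of $\nabla^B_\xi\hat X_1$ is $\sigma+\rho$, the $\hat Y_2$-coefficient is $T\kappa$, and so on; the identity $T'/T=\frac1T-T$ is what makes the torsion term cancel the $w\,T'$-pieces coming from differentiating $vT$ and $v/T$. The remaining three lines follow either by the same computation with $C$ or, more economically, from $H$-equivariance (the isotropy rotation sends $X_1\mapsto X_2$, $Y_2\mapsto -Y_1$) and from $J$-compatibility $\nabla^B J=0$: applying $J$ to the first two lines and using $J\hat X_i=-T\hat Y_i$ together with $\nabla^B_\xi(T\hat Y_i)=T'\hat Y_i+T\nabla^B_\xi\hat Y_i$ yields the $\hat Y$ lines, giving $\sigma-\rho$ since $T'/T=2\rho$. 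This $J$-check also serves as a consistency test.

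The main obstacle is the bookkeeping of signs in the torsion term (conventions for $d^c$, the sign $[\hat B,\hat C]=-\widehat{[B,C]}$, and the factor $-\frac12$ in $\nabla^B$); I would fix these by verifying the already-established facts $\nabla^B_{\hat A}\xi=\frac{u'}{2u}\hat A$ and the parallelism of $J$ as sanity checks before trusting the final coefficients.
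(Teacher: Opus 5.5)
Your proposal is correct and is essentially the paper's argument. Koszul gives $\tfrac12\tfrac{d}{dr}g$ on the $\{\hat X_1,\hat Y_2\}$ block, and the torsion correction to $g(\nabla^B_\xi\hat X_1,\hat Y_2)$ is $-\tfrac12 d^c\omega(\xi,\hat X_1,\hat Y_2)=\tfrac{T'}{2T}w$; your undetermined sign resolves as $d^c\omega(\xi,\hat X_1,\hat Y_2)=-d\omega(\hat A,\hat Y_1,\hat X_2)=(T-\tfrac1T)w$, and inverting $B$ then yields $\sigma+\rho$ and $T\kappa$. Deriving the remaining three lines from $H$-equivariance and $\nabla^BJ=0$ is a valid shortcut for the paper's ``obtained in the same way''; of your two sanity checks, however, only the $J$-check tests the torsion sign, because $d^c\omega(\hat A,\xi,\cdot)=0$.
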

\begin{proof} We give the sketch of proof of the first formula in
\eqref{Radial}, the remaining three formulas being obtained in the same way. We first recall that 
$$d^c\omega(\xi,\hat X_1,\hat X_2)= d^c\omega(\xi,\hat X_1,\hat Y_1)=0,\quad d^c\omega(\xi,\hat X_1,\hat Y_2)= \left(T-\frac 1T\right)w = -\frac{T'}T w.$$
Using then Koszul's formula, we obtain
\begin{align*} 2g(\nabla^B_\xi\hat X_1,\hat X_1)&=(vT)'\\ 
2g(\nabla^B_\xi \hat X_1,\hat Y_2)&=
2g(\nabla^{LC}_\xi \hat X_1,\hat Y_2)-d^c\omega(\xi,\hat X_1,\hat Y_2)=
w'+\frac{T'}T w\\
g(\nabla^B_\xi\hat X_1,\hat X_2) &= g(\nabla^B_\xi\hat X_1,\hat Y_1) =0
\end{align*} 
and therefore our claim follows using the inverse of the $2\times2$ metric block $\left(\begin{smallmatrix}vT&w\\w&v/T\end{smallmatrix}\right)$ relative to the subspace generated by $X_1,Y_2$.
\end{proof}
Therefore we obtain 
\begin{align*}\nabla^B_\xi \hat U_1
&=\left(a'+a(\sigma+\rho)\right)\hat X_1+\left(b'+b(\sigma+\rho)-\frac{\kappa}{T}\right)\hat X_2\\
&+\left(\sigma-\rho-bT\kappa\right)Y_1+aT\kappa\,\hat Y_2.
\end{align*}
The condition that this vector belong to $\Span\{U_1,U_2\}$ means that
$\nabla^B_\xi \hat U_1=\lambda \hat U_1+\mu \hat U_2$ and a comparison of the $Y_1,Y_2$-components gives
$$
\lambda=\sigma-\rho-bT\kappa,
\qquad
\mu=aT\kappa.
$$
A comparison of the $X_1,X_2$-components then yields
\begin{equation}\label{RadialEq}
\begin{split}
 a'&=-2\rho\,a-2T\kappa\,ab,\\
 b'&=-2\rho\,b+\frac{\kappa}{T}
     +T\kappa(a^2-b^2).
\end{split}
\end{equation}
The equations obtained from $U_2$ are the same, by $\SO(2)$-equivariance.

Now we note that $\vartheta\equiv 0$ gives a smooth solution. Indeed, if we set $\vartheta\equiv 0$, 
then $a\equiv 0$ and $b=\frac{v-s}{wT} =\frac{w}{T(v+s)}$. We have that for $c=1$,
$$v=r,\qquad w=\frac23r^2-\frac8{45}r^4+O(r^6),
\qquad s=\sqrt{r^2-w^2} =r-\frac29r^3+O(r^5),$$
and hence
$$b(r)=\frac13+\frac{8}{135}r^2+O(r^4)$$
showing that $b$ is smooth in a neighborhood of the origin. Moreover a simple computation shows that $b$ satisfies also the second equation in \eqref{RadialEq}. Therefore 
the distribution determined by \eqref{distr} extends smoothly to
the singular orbit and its limit there is precisely $T_oS^3$.
Since it satisfies the radial parallel transport equations and parallel
transport from $T_oS^3$ is unique, it follows that the 
parallel Lagrangian distribution $\mathcal E$  must be expressed along the curve $\g_r$ as 
\beq\label{Eexplicit}
 \mathcal E_{\gamma(r)} = {\operatorname{Span}}\left\{\hat A,\,\hat Y_1+b\hat X_2,\,
 \hat Y_2-b\hat X_1 \right\},\qquad b=\frac{v-s}{wT}.\eeq
\medskip
\noindent We now show that the $\SO(4)$-homogeneous extension of
\eqref{Eexplicit} cannot be $\nabla^B$-parallel.

Along the curve $\gamma_r$ we consider 
$$U=(\hat Y_1+b(r) \hat X_2)|_{\g_r},\qquad h(r):=g(U,U).$$
Since
$$g(\hat Y_1,\hat Y_1)=\frac vT,\qquad g(\hat X_2,\hat X_2)=vT,\qquad
g(\hat Y_1,\hat X_2)=-w,$$
we have
\begin{equation}\label{eq:h-def}
 h=
 \frac vT+b^2vT-2bw.
\end{equation}
We fix $r_o>0$ and consider a local extension $\tilde U$ of $U_{\gamma_{r_o}}$ as a section of $\mathcal E$. As $\mathcal E$ is parallel, we must have $\n^B_{\tilde U}\tilde U\in \Gamma(\mathcal E)$. In particular, as $\xi=J\hat A$ along $\g_r$, we have 
$$g(\n^B_{\tilde U}\tilde U,\xi)|_{\gamma_r}=0.$$
We now note that 
\beq\label{par}
g(\n^B_{\tilde U}\tilde U,\xi)|_{\gamma_r}=g(\n^{LC}_{\tilde U}\tilde U,\xi)|_{\gamma_r}=0
\eeq
as $d^c\omega(\tilde U,\tilde U,\xi)=0$. As the right-hand side in \eqref{par} is tensorial in $\tilde U$, we can consider the vector field on $Q_3\setminus S^3$
$$V:= \hat Y_1 + b(r)\hat X_2$$
and compute 
$$F(r):= g(\n^{LC}_{\tilde U}\tilde U,\xi)|_{\gamma_r} = g(\n^{LC}_{V}V,\xi)|_{\gamma_r} = 0.$$

We compute the right-hand side directly from the Koszul formula and we obtain
\begin{align*}
2g(\nabla^{LC}_VV,\xi)&=-\xi\,g(V,V)-2g([V,\xi],V)\\
&=-h'+2b'g(\hat X_2,V).
\end{align*}
Now $g(\hat X_2,V)=bvT-w$, 
while differentiating \eqref{eq:h-def} gives
$$
h'
=
\left(\frac vT\right)'
+b^2(vT)'
-2bw'
+2b'(bvT-w).
$$
The terms containing $b'$ cancel, and hence
\[
2F(r)= 2g(\nabla^{LC}_VV,\xi)= - \left[ \left(\frac vT\right)'+b^2(vT)'-2bw'\right].
\]
Recalling that 
$$v=r,\quad b= \frac{v-s}{wT},\quad s=\sqrt{v^2-w^2}$$
a straightforward computation shows that 
$$F(r)= -\frac s{T(r+s)}\Bigl[s'-r\frac{T'}T\Bigr].$$
As $s=r-\frac 29 r^3 + o(r^4)$ we see that 
$$F(r) = -\frac{32}{405}r^3+o(r^4).$$
This proves that $F(r)$ does not vanish in a suitable neighborhood of $r=0$, giving a contradiction.

\bigskip

\noindent{\bf Acknowledgements.} 
In investigating whether the family of metrics studied here had previously appeared in the literature, we consulted ChatGPT, 
which drew our attention to the Chamseddine--Volkov/Maldacena--Nu\~nez metric \cite{CV1,CV2,MN} and related work by 
Butti--Grana--Minasian--Petrini--Zaﬀaroni \cite{BGMPZ},
Casero--Nu\~nez--Paredes~\cite{CNP} and Martelli--Maldacena \cite{MM}.
The references and their relation to our construction were subsequently verified by the authors. \par
After the completion of this work, we became aware of related work by Mario Garc\'ia-Fern\'andez and Lorenzo Foscolo \cite{FG},  which contains results that, in particular, recover our main theorem using a different approach. 
We thank Mario Garc\'ia-Fern\'andez and Lorenzo Foscolo for informing us about their work.

The authors warmly thank Jeffrey Streets for very useful and insightful remarks. The authors acknowledge partial support by GNSAGA (INdAM, Italy).

\end{document}